\theoremstyle{plain}
\newtheorem{theorem}{Theorem}[section]
\newtheorem{lemma}[theorem]{Lemma}
\newtheorem*{appendixlemma}{Lemma A.1}
\newtheorem{corollary}[theorem]{Corollary}
\newtheorem{proposition}[theorem]{Proposition}
\crefname{claim}{Claim}{Claims}
\newtheorem*{claim*}{Claim}
\newtheorem{remark}[theorem]{Remark}
\theoremstyle{definition}
\newtheorem{definition}[theorem]{Definition}
\newtheorem{question}[theorem]{Question}
\crefname{question}{Question}{Questions}
\newtheorem*{convention*}{Convention}
\newtheorem*{remark*}{Remark}
\theoremstyle{remark}
\let\c@equation\c@theorem
\numberwithin{equation}{section}
\newcommand{\R}{\mathbb{R}}
\newcommand{\Z}{\mathbb{Z}}
\newcommand{\N}{\mathbb{N}}
\newcommand{\scl}{\mathrm{scl}}
\newcommand{\cl}{\mathrm{cl}}
\newcommand{\Map}{\mathrm{Map}}
\newcommand{\supp}{\mathrm{supp}}
\title{Stable commutator length on big mapping class groups}
\author{Elizabeth Field, Priyam Patel, and Alexander J. Rasmussen}
\begin{document}
\maketitle

\begin{abstract}
    
    We study stable commutator length on mapping class groups of certain infinite-type surfaces. In particular, we show that stable commutator length defines a continuous function on the commutator subgroups of such infinite-type mapping class groups. We furthermore show that the commutator subgroups are open and closed subgroups and that the abelianizations are finitely generated in many cases. Our results apply to many popular infinite-type surfaces with locally coarsely bounded mapping class groups. 
\end{abstract}

\section{Introduction}

Infinite-type surfaces and their mapping class groups have generated intense interest in the last several years. Two of the main themes in this area have been (1) the study of bounded cohomology of mapping class groups of infinite-type surfaces (so-called \emph{big mapping class groups}), and (2) the study of abelianizations of big mapping class groups. We address both of these themes in the present note. For the following discussion and the rest of the paper, a surface will be taken to mean an \emph{orientable surface without boundary}.

The recent interest in big mapping class groups may be largely attributed to a blog post of Calegari \cite{calblog} from 2009. The two themes mentioned above figure heavily in Calegari's original blog post, where he proves that the mapping class group of the sphere minus a Cantor set is \emph{uniformly perfect} (that is, every element is a product of a bounded number of commutators) and asks whether the second bounded cohomology of the mapping class group of the plane minus a Cantor set is trivial. Calegari's question was answered by J. Bavard in \cite{ray}, where she proves that the second bounded cohomology of the mapping class group of the plane minus a Cantor set is in fact infinite dimensional. The study of the second bounded cohomology of big mapping class groups has recently been taken considerably further (see \cite{AbbottMillerPatel}, \cite{simultaneous}, \cite{hypactions}, and \cite{wwpd}). In particular, if $S$ has a \emph{non-displaceable subsurface}, in the terminology of Mann--Rafi \cite{ManRaf20}, then the second bounded cohomology of its mapping class group, $H_b^2(\Map(S);\R)$, is known to be infinite dimensional (see \cite{hypactions}). 

While the definition of bounded cohomology is technical, it is a group invariant which has numerous applications. Some of the most important applications are those regarding rigidity properties of groups.
For instance, Bestvina--Fujiwara use bounded cohomology in \cite{bf} to prove the Farb--Kaimanovich--Masur Rigidity Theorem that higher rank lattices do not embed in mapping class groups of finite-type surfaces. In the context of big mapping class groups, bounded cohomology may be useful in producing obstructions for groups to act on finite-type surfaces by homeomorphisms (see \cite{ray} and \cite{circular}).

Second bounded cohomology is frequently understood using quasimorphisms. If $G$ is a group, then a function $\phi:G\to \R$ is a \emph{quasimorphism} if the quantity $|\phi(gh)-\phi(g)-\phi(h)|$ is bounded uniformly over all $g,h\in G$. The space of non-trivial quasimorphisms of $G$ embeds into the second bounded cohomology of $G$ with real coefficients. Thus, producing an infinite dimensional space of quasimorphisms is a frequently-used method for demonstrating infinite dimensionality of the second bounded cohomology of a group. Quasimorphisms may in turn be understood using the \emph{stable commutator length (or scl) function}, $\scl: [G,G] \to \R_{\geq 0}$. Namely, by C. Bavard's Duality Theorem \cite{scl}, $\scl(g)>0$ exactly when there is a quasimorphism $\phi:G\to \R$ which is unbounded on the powers of $g$ and infinite $\ell^\infty$ distance from any homomorphism. Given this reformulation, the result of J. Bavard mentioned above amounts to saying that the  $\scl$ function does not vanish on the mapping class group of the plane minus a Cantor set.

In this paper, we study the stable commutator length function on $\Map(S)$ for a broader class of infinite-type surfaces. Since $\scl$ is generally only defined on the commutator subgroup, it is important to understand the properties of this subgroup as well as its size as a subgroup of the mapping class group, as measured by studying the abelianization $\Map(S)/[\Map(S),\Map(S)]$. 
Using the classification of infinite-type surfaces with \emph{locally coarsely bounded} mapping class groups and the partial order on the space of ends developed by Mann--Rafi in \cite{ManRaf20}, we state our first main result as follows.

\newcommand{\tSCLisContinuous}{Let $S$ be an infinite-type surface. Assume that $\Map(S)$ is locally coarsely bounded and that every equivalence class of maximal ends of $S$ is infinite, except (possibly) for finitely many isolated planar ends. Then, stable commutator length is a continuous function on $[\Map(S), \Map(S)]$. Furthermore, the commutator subgroup is both an open and closed subgroup of $\Map(S)$ and is, thus, also Polish.
}

\begin{theorem} \label{T:SCLContinuous}
\tSCLisContinuous
\end{theorem}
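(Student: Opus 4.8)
The plan is to establish the three assertions in sequence, deriving continuity of $\scl$ from the algebraic and topological structure of $\Map(S)$. The key input I would exploit is the hypothesis that $\Map(S)$ is locally coarsely bounded, which (by Mann--Rafi) endows $\Map(S)$ with a well-defined quasi-isometry type and, crucially, makes it a Polish group in which the identity has a coarsely bounded neighborhood. The guiding principle is that $\scl$ is automatically continuous on any group $G$ once one knows (i) the commutator subgroup $[G,G]$ is open, and (ii) $\scl$ is bounded on a neighborhood of the identity inside $[G,G]$ --- for then $\scl$, being subadditive and $\Z$-homogeneous, will be uniformly continuous with respect to the word metric coming from a coarsely bounded generating set. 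So the real work is to prove openness of $[\Map(S),\Map(S)]$ and to produce a coarsely bounded (hence $\scl$-bounded) neighborhood of the identity lying inside it.

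First I would show the commutator subgroup is open. Because $\Map(S)$ is locally coarsely bounded, fix a coarsely bounded open neighborhood $U$ of the identity; the strategy is to show $U$ is contained in $[\Map(S),\Map(S)]$, or at least that some such neighborhood is. The natural tool here is a \emph{fragmentation} or \emph{commutator-expression} argument: mapping classes supported in a coarsely bounded region should be expressible as products of commutators, using the fact that homeomorphisms supported on a disk (or on a displaceable subsurface) are commutators --- this is the infinite-type analogue of the classical perfection arguments, and it is precisely where the hypothesis on the ends enters. The condition that every equivalence class of maximal ends is infinite (apart from finitely many isolated planar ends) is what guarantees enough ``room'' to displace subsurfaces and run the commutator trick, echoing Calegari's uniform perfectness proof for the sphere minus a Cantor set. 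Once $U \subseteq [\Map(S),\Map(S)]$, the subgroup is open (it contains a neighborhood of each of its points by translation), and since open subgroups of topological groups are automatically closed, we get closedness for free; a closed subgroup of a Polish group is Polish, giving the final sentence.

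Next, for continuity of $\scl$, I would argue that on the coarsely bounded neighborhood $U \subseteq [\Map(S),\Map(S)]$ the commutator length $\cl$ is uniformly bounded --- indeed this should fall out of the same fragmentation argument, which expresses each element of $U$ as a product of a \emph{uniformly bounded} number of commutators. From $\cl$ bounded on $U$ one deduces $\scl$ bounded on $U$ (since $\scl \le \cl$). Then the standard argument applies: for $g,h \in [\Map(S),\Map(S)]$ with $g^{-1}h \in U^n$ for small $n$, subadditivity $|\scl(g)-\scl(h)| \le \scl(g^{-1}h) \le n \cdot \sup_{u\in U}\scl(u)$ controls the oscillation of $\scl$, yielding continuity (in fact Lipschitz continuity) with respect to the coarsely bounded word metric, which generates the topology on $\Map(S)$.

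The main obstacle I anticipate is the uniform-boundedness step: showing not merely that every element of the coarsely bounded neighborhood $U$ lies in the commutator subgroup, but that it is a product of a \emph{bounded} number of commutators with the bound independent of the element. Establishing this requires a genuine uniform perfectness statement for the relevant ``local'' mapping classes, and the delicate point is handling the finitely many exceptional isolated planar ends and verifying that the end-structure hypotheses of Mann--Rafi really do provide the displaceability needed to carry out the commutator construction uniformly. I would expect the bulk of the technical effort --- and the careful case analysis on the topology of $S$ and its end space --- to be concentrated there.
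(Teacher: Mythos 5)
Your first two steps---producing an open neighborhood of the identity on which $\cl$ is uniformly bounded via a displacement/commutator trick, concluding that $[\Map(S),\Map(S)]$ is open, hence closed, hence Polish---match the paper's strategy (its Theorem 1.3 together with the elementary Lemma 2.7). The genuine gap is in your final step, the deduction of continuity of $\scl$. You invoke the inequality $|\scl(g)-\scl(h)|\leq \scl(g^{-1}h)$, but $\scl$ is \emph{not} subadditive: the correct general statement is $\scl(ab)\leq \scl(a)+\scl(b)+\tfrac{1}{2}$, with a defect term of $\tfrac{1}{2}$ that cannot be removed. Consequently your oscillation bound degenerates to $|\scl(g)-\scl(h)|\leq \scl(g^{-1}h)+\tfrac{1}{2}$, which only yields a coarse Lipschitz statement (the analogue of what the paper proves for $\cl$ in its Corollary 4.2), not continuity. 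Moreover, you only record that $\scl$ is \emph{bounded} on the neighborhood $U$; what is actually true, and needed, is that $\scl\equiv 0$ there, because $U$ can be taken to be a subgroup on which $\cl\leq 2$, so $\cl(g^n)\leq 2$ for all $n$ and $\scl(g)=\lim_n \cl(g^n)/n=0$.

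The missing idea is the power trick that converts ``$\scl$ vanishes on a neighborhood of the identity'' into genuine continuity. In the paper (Proposition 4.1, following Burago--Ivanov--Polterovich-type arguments), one fixes $g$ and an arbitrary integer $k\geq 1$, chooses a neighborhood $\mathcal{U}_k$ of $g$ so that $h\in\mathcal{U}_k$ implies $g^kh^{-k}\in\mathcal{O}$, and then uses Bavard duality: every homogeneous quasimorphism $\phi$ vanishes on $\mathcal{O}$ (since $\scl=0$ there), so homogeneity gives $k\,|\phi(g)-\phi(h)|=|\phi(g^k)+\phi(h^{-k})-\phi(g^kh^{-k})|\leq D(\phi)$, and dividing by $k$ and taking the supremum over defect-one quasimorphisms yields $|\scl(g)-\scl(h)|\leq \tfrac{1}{2k}$. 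It is the factor of $k$, absorbed by homogeneity, that kills the additive defect; without it the argument stalls at a constant error of $\tfrac{1}{2}$. (A minor additional confusion: the word metric coming from a CB generating set does not induce the compact-open topology on $\Map(S)$, so continuity must be argued with respect to the neighborhoods $\mathcal{V}_K$, not the coarse metric.)
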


\noindent The locally coarsely bounded property (called locally CB for short) is related to the actions of $\Map(S)$ on metric spaces and is relevant to defining a coarse geometry for $\Map(S)$. The maximal ends of $S$ are those ends which are maximal with respect to the partial order on the ends of $S$ mentioned above; see \Cref{S: CBEnds} for more details. These maximal ends are organized into equivalence classes of ends with the same topology, and each equivalence class consists of either a finite set or a Cantor set of ends. We note that the hypotheses in \Cref{T:SCLContinuous} are satisfied by an uncountably infinite collection of surfaces and apply to many popular infinite-type surfaces, including the plane minus a Cantor set, the once-punctured blooming Cantor tree surface, and any finite-type surface minus a Cantor set. The commutator subgroup being closed under the above hypotheses is a bit surprising, since the results of Dickmann--Domat \cite[Theorem A.1]{Domat} and Malestein--Tao \cite[Theorem 5.3]{selfsim} show that the commutator subgroup is \emph{not} closed for a large class of infinite-type surfaces (including the Loch Ness Monster surface). 

\Cref{T:SCLContinuous} implies that whenever $\scl(g)$ is positive, it is also positive for all nearby mapping classes $h$. That is, by modifying $g$ outside of any sufficiently large compact subsurface of $S$, we obtain a new mapping class with positive stable commutator length. This provides numerous new examples of mapping classes with positive stable commutator length. On the other hand, J. Bavard shows in \cite{ray} that there are infinite-type mapping classes which act loxodromically on natural hyperbolic graphs, but which nonetheless have zero $\scl$. It is interesting to compare this to the fact that for finite-type surfaces, most pseudo-Anosovs have positive scl \cite{BesBroFuj16}, and pseudo-Anosovs are exactly the mapping classes that act as loxodromic isometries on the curve graph. In view of these results, we ask the following question.

\begin{question}
Let $S$ be an infinite-type surface for which $\Map(S)$ is locally CB and such that every equivalence class of maximal ends of $S$ is infinite, except for finitely many isolated planar ends. Let $Z = \{g\in \Map(S) \mid \scl(g) = 0\}$. What are the topological properties of the subset $Z$? In particular, is $Z$ an open subset of $\Map(S)$?
\end{question}

\noindent Note that by \Cref{T:SCLContinuous}, the set $Z$ must be a closed subset of $\Map(S)$. 
Progress on this question will be crucial to better understanding the stable commutator length function on big mapping class groups.

The proof of \Cref{T:SCLContinuous} relies on finding a neighborhood of the identity in $\Map(S)$ on which scl is constantly zero. That such a neighborhood exists is the content of our next main result. We show that there is a uniform bound on commutator length (denoted $\cl$) for this neighborhood, and the neighborhood depends on a particular partitioning of the end space of $S$ from the classification of surfaces with locally CB mapping class groups \cite{ManRaf20}. In particular, Mann--Rafi show that if $S$ is an infinite-type surface such that $\Map(S)$ is locally CB, then there exists a finite-type subsurface $K\subset S$ such that the complementary components of $K$ partition $E(S)$ into finitely many clopen sets \[E(S) = A_1 \sqcup \cdots \sqcup A_n \sqcup P_1 \sqcup \cdots \sqcup P_m, \] where each $A_i$ is \emph{self-similar} and each $P_k$ is homeomorphic to a clopen subset of some $A_i$. Given this partition, we prove the following result.

\newcommand{\tBoundedCommLength}{Let $S$ be an infinite-type surface. Assume that $\Map(S)$ is locally coarsely bounded and that every equivalence class of maximal ends of $S$ is infinite, except (possibly) for finitely many isolated planar ends. Then, there exists an open subgroup $\mathcal{O}$ of $\Map(S)$ such that the function $\cl$ is bounded from above by 2 on $\mathcal{O}$.
}

\begin{theorem} \label{T:BoundedCommutatorLength}
\tBoundedCommLength
\end{theorem}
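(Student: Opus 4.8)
The plan is to take $\mathcal{O}$ to be the subgroup of $\Map(S)$ consisting of mapping classes admitting a representative that restricts to the identity on $K$ (equivalently, on an open neighborhood of $K$), where $K$ is the finite-type subsurface furnished by Mann--Rafi. This subgroup contains a basic open neighborhood of the identity and is therefore open. Writing the complementary components of $K$ as $\Sigma_1, \ldots, \Sigma_n$ (with end spaces $A_1, \ldots, A_n$) and $Q_1, \ldots, Q_m$ (with end spaces $P_1, \ldots, P_m$), any $f \in \mathcal{O}$ fixes each boundary curve of $K$ and hence preserves every complementary component setwise. Consequently $f$ factors as a product $f = f_1 \cdots f_n \, g_1 \cdots g_m$ of mapping classes with pairwise disjoint supports, where $f_i$ is supported on $\Sigma_i$ and $g_k$ is supported on $Q_k$. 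I will repeatedly use the elementary identity that, whenever $a_1, b_1$ have support disjoint from that of $a_2, b_2$, one has $[a_1, b_1][a_2, b_2] = [a_1 a_2, b_1 b_2]$; this lets any product of commutators with mutually disjoint supports collapse into a single commutator.

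Next I would show that $\prod_{i} f_i$ is a single commutator. For each self-similar piece, the infiniteness hypothesis guarantees that $A_i$ is an infinite self-similar end space, so self-similarity produces a clopen subset $A_i^{(0)} \subseteq A_i$ homeomorphic to $A_i$ together with infinitely many pairwise disjoint clopen copies of it and a shift homeomorphism $h_i$ of $\Sigma_i$ permuting these copies toward the ends. Realizing $A_i^{(0)}$ by a subsurface $\Sigma_i^{(0)} \subsetneq \Sigma_i$ homeomorphic to $\Sigma_i$, I conjugate $f_i$ by a homeomorphism $\psi_i \colon \Sigma_i \to \Sigma_i^{(0)}$ so that its support becomes displaceable, and then run the standard infinite swindle: setting $G_i = \prod_{j \geq 0} h_i^{\,j}(\psi_i f_i \psi_i^{-1}) h_i^{-j}$ (well defined since the supports are disjoint) gives $\psi_i f_i \psi_i^{-1} = [G_i, h_i]$, whence $f_i$ is itself a single commutator. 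Since the $\Sigma_i$ are disjoint, the supporting data for distinct $i$ commute, and the collapsing identity combines these into a single commutator $C_1 = \prod_i f_i$.

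I would then treat the subordinate pieces $\prod_k g_k$ in an analogous way, which accounts for the bound being $2$ rather than $1$: the $Q_k$ carry no intrinsic self-similar room, so the swindle must borrow it from the self-similar reservoirs, and doing so simultaneously with the swindles of the previous paragraph would force overlapping supports. Because $P_k$ is homeomorphic to a clopen subset of some $A_{i(k)}$ and $A_{i(k)}$ is infinite and self-similar, $\Sigma_{i(k)}$ contains infinitely many pairwise disjoint copies of $Q_k$, so there is a shift homeomorphism $h_k$ pushing disjoint copies of $Q_k$ into this reservoir; allocating disjoint reservoirs for distinct $k$ (possible since each $A_i$ contains infinitely many disjoint copies of everything it must absorb) keeps the supporting data for distinct $k$ commuting. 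The same swindle expresses each $g_k$ as a single commutator, and the collapsing identity combines them into a single commutator $C_2 = \prod_k g_k$. The finitely many isolated planar ends are handled identically, viewing each as a clopen puncture absorbed into a neighboring reservoir. Combining, $f = C_1 C_2$, and hence $\cl(f) \leq 2$.

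The crux, and the step I expect to demand the most care, is the self-similar swindle: extracting from the Mann--Rafi notion of self-similarity an honest shift homeomorphism together with an infinite family of pairwise disjoint copies whose union carries a conjugate of the full support of $f_i$, and verifying that the topological type (including the genus distribution among the ends) of $\Sigma_i^{(0)}$ matches that of $\Sigma_i$ so that the conjugating homeomorphism $\psi_i$ exists. The remaining bookkeeping---ensuring all invoked supports are genuinely pairwise disjoint so that the two collapsing steps are valid---is routine once this geometric input is in place.
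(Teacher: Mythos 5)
Your overall strategy (factor an element of a basic neighborhood into pieces supported on the complementary regions of $K$, turn each piece into a commutator by an infinite swindle along a shift, and merge disjointly supported commutators into one) is the same as the paper's. But there is a genuine gap at the central step, and it is precisely the point where the paper's proof diverges from yours.

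You take $\mathcal{O}=\mathcal{V}_K$, so your piece $f_i$ is supported on the \emph{entire} complementary region $\Sigma_i$ with end space $A_i$, and you propose to conjugate $f_i$ into a proper subsurface $\Sigma_i^{(0)}\subsetneq\Sigma_i$ so that its support becomes displaceable. To conclude that $f_i$ is a commutator in $\Map(S)$ from $\psi_i f_i\psi_i^{-1}$ being one, the map $\psi_i\colon\Sigma_i\to\Sigma_i^{(0)}$ must be the restriction of a homeomorphism of all of $S$; checking that $\Sigma_i^{(0)}\cong\Sigma_i$, which is all you propose to verify, is not enough --- you also need $S\setminus\Sigma_i\cong S\setminus\Sigma_i^{(0)}$. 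This fails in general: $S\setminus\Sigma_i^{(0)}$ has end space $(E(S)\setminus A_i)\sqcup(A_i\setminus A_i^{(0)})$, and $A_i\setminus A_i^{(0)}$ carries a Cantor set of maximal ends of the type of $A_i$, which $E(S)\setminus A_i$ need not contain at all. Concretely, for the plane minus a Cantor set, $\Sigma_1$ is a disk minus the Cantor set $C$ and its complement has the single isolated end; no homeomorphism of $S$ can carry $\Sigma_1$ onto a subsurface whose end space is a proper clopen subset of $C$, since the complement would then acquire a Cantor set of ends. So the ambient conjugation you need does not exist, and the swindle for $f_i$ does not get off the ground. The paper sidesteps this entirely: it enlarges $K$ to a subsurface $K_1$ that splits each $A_i$ into two clopen halves $A_i^1\sqcup A_i^2$, each containing maximal ends (hence each homeomorphic to $A_i$ by their Lemma 3.1), and takes $\mathcal{O}=\mathcal{V}_{K_1}$, which is \emph{smaller} than $\mathcal{V}_K$. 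An element of $\mathcal{V}_{K_1}$ is then automatically supported on subsurfaces whose end spaces contain some but not all maximal ends of the relevant $A_i$, so they are displaceable within the $A_i$-region itself and the swindle applies with no ambient conjugation. Since the theorem only asks for \emph{some} open subgroup, shrinking the neighborhood costs nothing; your stronger claim that $\mathcal{V}_K$ itself works is exactly what your argument fails to justify.

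Two smaller points. First, your treatment of the subordinate pieces $P_k$ also requires conjugating a mapping class supported on $Q_k$ into the reservoir by a homeomorphism of $S$; here the complement-matching can be verified (the paper does it, using $(A_1^1\setminus Q_1)\sqcup A_1^2\sqcup P_1\cong A_1^1\sqcup A_1^2$), but it must actually be checked rather than asserted. Second, a finite set of isolated planar ends cannot be ``absorbed into a neighboring reservoir'': a clopen singleton is not homeomorphic to a clopen subset of a Cantor set. Those pieces are instead harmless because a mapping class supported on a once-punctured disk is trivial.
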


\noindent The set $ \mathcal O$ in the above result is taken to be a neighborhood of the identity of the form $\mathcal{V}_{K'} = \{g\in \Map(S) \mid g|_{K'} = \operatorname{id}\}$ for some $K' \supset K$. 
Another consequence of the existence of this neighborhood of the identity is the following result.

\newcommand{\tFiniteAbelianization}{
Let $S$ be a surface with tame end space, such that $\Map(S)$ is CB generated and locally (but not globally) CB, and such that every equivalence class of maximal ends of $S$ is infinite, except (possibly) for finitely many isolated planar ends. Then, the abelianization $\Map(S)/[\Map(S),\Map(S)]$ is finitely generated and is discrete when endowed with the quotient topology.
}
\begin{theorem}\label{T:FiniteAbelianization}
\tFiniteAbelianization
\end{theorem}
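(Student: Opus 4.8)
The plan is to read off both conclusions from the open subgroup $\mathcal{O} = \mathcal{V}_{K'}$ produced by \Cref{T:BoundedCommutatorLength}. The first observation is that the bound $\cl \leq 2$ on $\mathcal{O}$ forces every element of $\mathcal{O}$ to be a product of at most two commutators, so $\mathcal{V}_{K'} \subseteq [\Map(S),\Map(S)]$. Since $\mathcal{V}_{K'}$ is open and contained in the commutator subgroup, that subgroup is a union of left cosets $g\mathcal{V}_{K'}$ (one for each $g$ in it), each of which is open; hence $[\Map(S),\Map(S)]$ is itself open. In a topological group, an open normal subgroup gives a discrete quotient: the identity coset is open, and left translation spreads this to every coset. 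This already settles the topological half of the statement, that $\Map(S)/[\Map(S),\Map(S)]$ is discrete.

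For finite generation, I first note that local CB provides a finite-type subsurface $K$ with $\mathcal{V}_K$ coarsely bounded, and since $K' \supseteq K$ we have $\mathcal{V}_{K'} \subseteq \mathcal{V}_K$, so $\mathcal{V}_{K'}$ is an \emph{open coarsely bounded subgroup}. The crucial step is to convert the hypothesis that $\Map(S)$ is CB generated into a statement relative to $\mathcal{V}_{K'}$: namely, that $\Map(S) = \langle \mathcal{V}_{K'} \cup F \rangle$ for some finite set $F = \{g_1, \dots, g_k\}$. I would obtain this from Rosendal's characterization of coarsely bounded sets relative to an open subgroup. If $A$ is a coarsely bounded generating set, then for the open subgroup $\mathcal{V}_{K'}$ there exist a finite set $F$ and an integer $n$ with $A \subseteq (F\,\mathcal{V}_{K'})^n \subseteq \langle \mathcal{V}_{K'} \cup F \rangle$, whence $\Map(S) = \langle A \rangle \subseteq \langle \mathcal{V}_{K'} \cup F \rangle \subseteq \Map(S)$ and all three coincide. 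Alternatively, one may invoke the explicit generating sets furnished by the Mann--Rafi classification of CB-generated big mapping class groups with tame end space, which exhibit $\Map(S)$ as generated by a finite-type subsurface stabilizer together with finitely many shift-type mapping classes.

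With the generation statement in hand the abelianization is immediate. Passing to $\Map(S)^{ab} = \Map(S)/[\Map(S),\Map(S)]$, the image of $\mathcal{V}_{K'}$ is trivial because $\mathcal{V}_{K'} \subseteq [\Map(S),\Map(S)]$; therefore $\Map(S)^{ab}$ is generated by the images of $g_1, \dots, g_k$ alone, and is finitely generated. Combined with the discreteness established above, this completes the proof.

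I expect the main obstacle to be the generation step. The subtlety is that $\mathcal{V}_{K'}$ is typically not locally finite as a subgroup, so a coarsely bounded generating set need not meet only finitely many cosets of $\mathcal{V}_{K'}$; one cannot naively pick finitely many coset representatives and must instead route through the containment $A \subseteq (F\,\mathcal{V}_{K'})^n$, which is exactly what coarse boundedness relative to the open subgroup $\mathcal{V}_{K'}$ supplies. The hypotheses that the end space is tame and that $\Map(S)$ is locally but not globally CB enter to place us squarely within the Mann--Rafi classification, ensuring that $K'$ may be chosen so that $\mathcal{V}_{K'}$ is a \emph{proper} open coarsely bounded subgroup realizing the required partition of $E(S)$.
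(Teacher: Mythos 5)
Your argument is correct, and its overall architecture matches the paper's: deduce $\mathcal{V}_{K'}\subseteq[\Map(S),\Map(S)]$ from \Cref{T:BoundedCommutatorLength}, conclude openness of the commutator subgroup and discreteness of the quotient, and then finitely generate the abelianization by the image of a finite set that, together with $\mathcal{V}_{K'}$, generates $\Map(S)$. The one step where you genuinely diverge is the generation statement $\Map(S)=\langle \mathcal{V}_{K'}\cup F\rangle$. The paper obtains this by invoking Mann--Rafi's Theorem~1.6 (which produces a finite set $F$ with $\Map(S)=\langle \mathcal{V}_K\cup F\rangle$ for any $K$ satisfying the conditions of \Cref{MannRafiTh5.7}) and then carefully re-verifying that the enlarged subsurface $K_1$ still satisfies conditions (1)--(3) of that theorem, so that the same result applies with $\mathcal{V}_{K_1}$ in place of $\mathcal{V}_K$. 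Your primary route instead applies Rosendal's covering characterization of coarse boundedness in Polish groups --- a CB set $A$ satisfies $A\subseteq(F\mathcal{V}_{K'})^n$ for some finite $F$ and some $n$, for \emph{any} identity neighborhood $\mathcal{V}_{K'}$ --- to a CB generating set. This is valid and arguably cleaner: it sidesteps the verification that $K_1$ satisfies the Mann--Rafi partition conditions, it works for any open identity neighborhood contained in the commutator subgroup (no need for $\mathcal{V}_{K'}\subseteq\mathcal{V}_K$ or for $\mathcal{V}_{K'}$ itself to be CB), and it makes visible that the tameness hypothesis is used only to guarantee CB generation (via Mann--Rafi), not in the generation-modulo-$\mathcal{V}_{K'}$ step itself. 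What the paper's more concrete route buys is an explicit finite generating set coming from the Mann--Rafi classification rather than an abstract finite set supplied by the covering property. Your diagnosis of the potential pitfall --- that one cannot expect a CB generating set to meet only finitely many cosets of $\mathcal{V}_{K'}$ and must instead pass through $(F\mathcal{V}_{K'})^n$ --- is exactly right.
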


Our method for proving \Cref{T:BoundedCommutatorLength} utilizes a technique which originated in Calegari's blog post \cite{calblog}. This technique has a long history for proving perfectness of various groups of homeomorphisms. 
The main tool used in this paper was independently and concurrently developed by Malestein--Tao in \cite{selfsim} to prove uniform perfectness and perfectness of certain big mapping class groups; compare our \Cref{L:SelfSimilarSubsetsHomeo}, \Cref{ShiftMapTake2}, and \Cref{T:BoundedCommutatorLength} with their Lemma 2.8, Lemma 3.5, and Lemma 3.7, respectively. In the case where the surface $S$ is ``uniformly self-similar,'' meaning that $S$ has 0 or infinite genus and that the entire end space is self-similar with exactly one equivalence class of maximal ends which is infinite, their results imply ours trivially. However, the uniformly self-similar surfaces are a small subset of the set of surfaces to which our results apply. In particular, within the class of surfaces we consider, the results of Malestein--Tao apply to those surfaces which are globally CB. We also note that the surfaces covered in \Cref{T:FiniteAbelianization} include finite-type surfaces with a Cantor set of points removed. The abelianization of the mapping class group of these surfaces has been previously studied and is known by \cite{CalChen}.

While the conditions we require the surfaces in \Cref{T:SCLContinuous} and \Cref{T:BoundedCommutatorLength} to satisfy are somewhat difficult to parse, it turns out that there is a nice topological characterization of such surfaces which was suggested to us by J. Malestein and J. Tao. In particular, the surfaces to which these theorems apply are those which can be constructed as a connected sum of uniformly self-similar surfaces with a finite-type surface. 

\newcommand{\tTopologicalCharacterization}{
A surface $S$ is the connected sum of finitely many uniformly self-similar surfaces with a finite-type surface if and only if $\Map(S)$ is locally CB and each equivalence class of maximal ends is infinite, except (possibly) for finitely many isolated planar ends.
}

\begin{appendixlemma}\label{T:TopologicalCharacterization}
\tTopologicalCharacterization
\end{appendixlemma}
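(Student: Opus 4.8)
The plan is to prove both directions of the equivalence by translating between the topological operation of connected sum and the combinatorial data of the end space with its Mann--Rafi partial order. The key dictionary is that a \emph{uniformly self-similar} surface has end space which is a single self-similar equivalence class of maximal ends (either a Cantor set or a single point), together with genus $0$ or infinite genus, and that taking a connected sum with a finite-type surface alters the surface only within a compact region: it adjoins finitely many handles and punctures but leaves the ``ends at infinity'' untouched. Thus I would first record precisely how the end space and genus of a connected sum $S = S_1 \# \cdots \# S_j \# F$ (with $F$ finite type) decompose: the end space is the disjoint union $E(S) = E(S_1) \sqcup \cdots \sqcup E(S_j) \sqcup E(F)$, where each $E(S_i)$ is a self-similar class of maximal ends and $E(F)$ contributes finitely many ends, which are either isolated planar ends (punctures) or absorbed below the maximal ends coming from the infinite-genus summands.

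For the forward direction, I would start from such a connected sum and verify the two conditions. First, local coarse boundedness: by the Mann--Rafi classification, $\Map(S)$ is locally CB exactly when $S$ admits a finite-type subsurface $K$ whose complement partitions $E(S)$ into a self-similar piece $A_1 \sqcup \cdots \sqcup A_n$ together with finitely many clopen ``subordinate'' pieces $P_k$. Taking $K$ to contain all the finite-type handles and punctures from $F$ and to separate the self-similar summands, the complementary components are exactly the $E(S_i)$, each of which is self-similar by hypothesis, so the local CB criterion is met. Second, that each equivalence class of maximal ends is infinite except for finitely many isolated planar ends: the maximal ends are precisely the tops of the $E(S_i)$, and a self-similar end space of a uniformly self-similar surface is either a Cantor set (hence an infinite class) or a single isolated planar end contributed by a genus-$0$, one-ended summand, i.e.\ a plane; the only isolated planar ends come from these and from the finitely many punctures of $F$, so the exceptional set is finite.

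For the reverse direction, I would begin from a surface $S$ satisfying the two hypotheses and reconstruct the connected-sum decomposition. Using the Mann--Rafi partition $E(S) = A_1 \sqcup \cdots \sqcup A_n \sqcup P_1 \sqcup \cdots \sqcup P_m$ furnished by local coarse boundedness, I would cut $S$ along the boundary curves of the finite-type subsurface $K$ realizing this partition. Each complementary component $R_i$ accumulating to $A_i$ has a self-similar end space, and after possibly redistributing genus I would argue each $R_i$ is homeomorphic to a uniformly self-similar surface: the self-similarity of $A_i$ together with the genus hypothesis (genus is either $0$ or, being non-displaceable and accumulated by an infinite maximal class, infinite) forces each $R_i$ into the uniformly self-similar form. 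Capping the boundary curves of $K$ with disks recovers a finite-type surface $F$, and reversing the cutting exhibits $S = R_1 \# \cdots \# R_n \# F$ as the desired connected sum. The subordinate pieces $P_k$, being clopen subsets of some $A_i$, get absorbed into the corresponding self-similar summand or contribute only isolated planar ends to $F$.

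The main obstacle is the reverse direction, specifically verifying that each complementary piece $R_i$ is genuinely \emph{uniformly} self-similar rather than merely having a self-similar maximal class. The subtlety is twofold: one must control the genus of $R_i$, ruling out the possibility of a surface whose maximal ends are self-similar but whose genus is positive and finite (which is excluded because finite positive genus would be displaceable and hence incompatible with accumulating to a non-displaceable infinite maximal class), and one must ensure that the \emph{entire} end space of $R_i$, not just its maximal ends, is self-similar. The latter requires invoking the self-similarity definition carefully: a self-similar space is characterized by the property that for any clopen decomposition, one piece contains a clopen copy of the whole, and I would need to check that the non-maximal ends beneath $A_i$ do not obstruct this, which follows from the structure of the Mann--Rafi partial order but demands a careful case analysis when isolated planar ends accumulate alongside the maximal Cantor set.
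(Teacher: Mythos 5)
Your overall architecture matches the paper's (apply the Mann--Rafi classification in both directions, absorb the subordinate pieces, cap off boundaries to recover the finite-type summand), but there are two genuine gaps, plus a definitional slip. First, in the forward direction you check only conditions (1) and (2) of \Cref{MannRafiTh5.7} and treat local coarse boundedness as following once the complementary end sets are self-similar. The substantive condition is (3): for every maximal end $x$ and every neighborhood $V$ of $x$, some homeomorphism of $S$ must carry $V$ over the entire complementary region containing $x$. This is not automatic, and the paper's proof spends most of the forward direction on it: one must first enlarge $K$ to $K_1$ so that each $A_i$ is split into two clopen halves $A_i^1\sqcup A_i^2$, each containing maximal ends, so that \Cref{L:SelfSimilarSubsetsHomeo} lets you absorb the leftover $A_1^1\setminus E(V)$ into $A_1^2$ and conclude $(V, S-V)\cong(V_1^1,S-V_1^1)$. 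Without this splitting trick the displacement condition does not follow from self-similarity alone, and your proposal gives no argument for it.

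Second, in the reverse direction you assert that the pieces $P_k$ ``get absorbed into the corresponding self-similar summand,'' but this is precisely the step that needs proof and is where the hypothesis that the maximal classes are infinite enters. The absorption rests on the fact that $A_i\sqcup P_k\cong A_i$ (\Cref{C:PinA}), which in turn requires $\mathcal{M}(A_i)$ to be a Cantor set; if $\mathcal{M}(A_i)$ were a single point the claim can fail, so a proof that does not isolate and justify this homeomorphism has not used the key hypothesis anywhere. The paper realizes the absorption topologically by joining $\partial V_i$ to $\partial W_k$ with an arc inside $K$ and taking a regular neighborhood, which is also needed to keep the complementary pieces one-boundary-component surfaces of genus zero or infinity. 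Relatedly, your reading of ``uniformly self-similar'' is off: the paper requires the end space to contain a \emph{Cantor set} of maximal ends, so a plane (a single isolated planar end) is not a uniformly self-similar summand; the finitely many isolated planar ends in the statement arise only as punctures of the finite-type summand. Your worry about finite positive genus of a complementary piece is legitimate but is handled simply by the convention that the Mann--Rafi subsurface $K$ can be chosen so that each complementary piece has genus zero or infinity, rather than by any displaceability argument.
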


\medskip

\noindent \textbf{Outline of paper.} In \Cref{S:Background}, we provide background on stable commutator length and quasimorphisms, as well as relevant details about infinite-type surfaces and their mapping class groups. In \Cref{S:ProofOf1.3}, we give the proof of \Cref{T:BoundedCommutatorLength}. In \Cref{S:ProofOf1.1}, we provide the proofs of \Cref{T:SCLContinuous} and \Cref{T:FiniteAbelianization}. Finally, in the Appendix, we provide a concrete topological characterization of the surfaces which satisfy the hypotheses of \Cref{T:SCLContinuous} and \Cref{T:BoundedCommutatorLength}.

\medskip

\noindent \textbf{Acknowledgements.} The authors thank Justin Malestein and Jing Tao for useful conversations about their paper \cite{selfsim} and for suggesting the topological characterization of surfaces which satisfy the hypotheses of \Cref{T:SCLContinuous} and \Cref{T:BoundedCommutatorLength}. The authors also thank Kathryn Mann and Kasra Rafi for useful conversations about their paper \cite{ManRaf20}. Lastly, we thank the referee for thoughtful comments and suggestions that have increased the quality of this paper. Field was partially supported by NSF grants DMS-1840190 and DMS-2103275. Patel was partially supported on DMS-1937969 and DMS-2046889. Rasmussen was partially supported by NSF grant DMS-1840190.

\section{Preliminaries}\label{S:Background}
\subsection{Stable commutator length and quasimorphisms}

\begin{definition}
Let $G$ be a group and let $g\in [G,G]$ be an element of the commutator subgroup. The \emph{commutator length} of $g$ is defined to be $$\cl(g) = \inf \{n \mid g = [h_1, k_1][h_2, k_2] \cdots [h_n, k_n]\},$$ where $[h,k]$ denotes the commutator $hkh^{-1}k^{-1}$.
\end{definition}

\noindent Note that commutator length is sub-additive, i.e. $\cl(g^m\cdot g^n) \leq \cl(g^m) + \cl(g^n)$. In particular, $\cl(g^n) \leq n\cdot\cl(g)$ for all $n\geq 1$. 

\begin{definition} 
The \emph{stable commutator length} of $g\in [G, G]$ is defined to be $$\scl(g) = \lim_{n\to \infty} \frac{\cl(g^n)}{n}.$$ By the sub-additivity of commutator length, we see that this limit exists and that $\scl(g) \leq \cl(g)$.
\end{definition}

\noindent Thus, stable commutator length is a function $\scl:[G,G]\to \R_{\geq 0}$ that measures the rate of growth of commutators in $g^n$.
The notion of stable commutator length is intimately connected with quasimorphisms and bounded cohomology. In particular, bounded cohomology is understood using quasimorphisms, and quasimorphisms may in turn be understood using the stable commutator length function. A map $\phi: G\to \mathbb{R}$ is a \emph{quasimorphism} if there exists a constant $D$ such that $|\phi(gh)-\phi(g)-\phi(h)|\leq D$ for all $g,h\in G$. 

\begin{definition}
A quasimorphism $\phi: G\to \mathbb{R}$ is said to have \emph{defect at most $D$} if for all $g,h\in G$, $$|\phi(gh) - \phi(g) - \phi(h)| \leq D.$$ We denote by $D(\phi)$ the smallest $D$ such that $\phi$ has defect at most $D$ and call $D(\phi)$ the \emph{defect} of $\phi$. Note that $\phi$ is a homomorphism if and only if $D(\phi) = 0$. We say that $\phi$ is \emph{homogeneous} if for all $g\in G$ and $n\in \mathbb{Z}$, $\phi(g^n) = n\phi(g)$. 
\end{definition}

We let $\widetilde{QH}(G)$ denote the set of homogeneous quasimorphisms from $G$ to $\mathbb{R}$. The set $\widetilde{QH}(G)$ is a real vector space, and the homomorphisms from $G$ to $\R$, $H^1(G;\R)$, make up a linear subspace of $\widetilde{QH}(G)$. Let $\widetilde{QH}_1(G)$ denote the subset of $\widetilde{QH}(G)$ consisting of homogeneous quasimorphisms of defect one.
The connection between quasimorphisms and stable commutator length is best summarized by the following theorem (see \cite{scl} or \cite[Section 2.5]{calbook} for more details).

\begin{theorem}[Bavard duality, \cite{scl}]\label{BavardDuality}
For any $g\in [G,G]$, $$\scl(g) = \sup_{\phi\in \widetilde{QH}(G) \setminus H^1(G;\R)} \frac{|\phi(g)|}{2D(\phi)} = \sup_{\phi\in \widetilde{QH}_1(G)} \frac{|\phi(g)|}{2}.$$
\end{theorem}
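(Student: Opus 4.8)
The plan is to prove the two asserted equalities in turn. The second equality, between the supremum over $\widetilde{QH}(G)\setminus H^1(G;\R)$ and the supremum over $\widetilde{QH}_1(G)$, is a pure normalization: any $\phi\in\widetilde{QH}(G)\setminus H^1(G;\R)$ has $D(\phi)>0$, and replacing $\phi$ by $\psi=\phi/D(\phi)\in\widetilde{QH}_1(G)$ leaves the ratio $|\phi(g)|/(2D(\phi))=|\psi(g)|/2$ unchanged, while every element of $\widetilde{QH}_1(G)$ arises this way. So it remains to prove the first equality, which I would split into the two opposite inequalities between $\scl(g)$ and $\sup_\phi|\phi(g)|/(2D(\phi))$.

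For the inequality $\scl(g)\ge\sup_\phi|\phi(g)|/(2D(\phi))$, I would fix a homogeneous quasimorphism $\phi$ of defect $D=D(\phi)$ and record two standard estimates. First, homogeneity forces $\phi$ to be conjugation-invariant: applying the defect inequality twice gives $|\phi(hg^nh^{-1})-\phi(g^n)|\le 2D$ for all $n$, and since both sides scale linearly in $n$ by homogeneity, the linear coefficients $\phi(hgh^{-1})$ and $\phi(g)$ must agree. Second, conjugation-invariance yields the commutator bound $|\phi([a,b])|\le D$, by writing $[a,b]=(aba^{-1})b^{-1}$ and using $\phi(aba^{-1})=\phi(b)=-\phi(b^{-1})$. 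Combining these, if $g$ is a product of $n$ commutators then telescoping the defect inequality bounds $|\phi(g)|\le(2n-1)D$, so $|\phi(g)|\le(2\cl(g)-1)D$. Applying this to $g^m$ and using $\phi(g^m)=m\phi(g)$ gives $m|\phi(g)|\le(2\cl(g^m)-1)D$; dividing by $m$ and letting $m\to\infty$, the definition of $\scl$ yields $|\phi(g)|\le 2\scl(g)\,D(\phi)$, which is the claim after rearranging and taking the supremum over $\phi$.

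The reverse inequality $\scl(g)\le\sup_\phi|\phi(g)|/(2D(\phi))$ is the substantive direction, and I would obtain it by a Hahn--Banach duality argument. The idea is to realize $\scl$ as (one half of) the stable $\ell^1$-filling seminorm on the space $B_1^H(G)\subseteq C_1(G;\R)$ of real group $1$-boundaries, where the seminorm of the class of $g$ measures the least $\ell^1$-mass of real group $2$-chains whose boundary represents $g$. The point is then that homogeneous quasimorphisms, taken modulo $H^1(G;\R)$ and normed by the defect $D$, form exactly the dual of this seminormed space: a homogeneous $\phi$ has bounded coboundary, so it pairs with $1$-chains to give a functional on $(B_1^H(G),\|\cdot\|_{\scl})$ whose operator norm is comparable to $D(\phi)$. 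Granting this identification, if $\scl(g)>c$ then the class of $g$ has filling seminorm exceeding $2c$, and Hahn--Banach produces a bounded functional of controlled norm that is nonzero on it; unwinding the identification exhibits this functional as a homogeneous quasimorphism $\phi$ with $|\phi(g)|/(2D(\phi))>c$, as required.

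The main obstacle is precisely the setup underlying the previous paragraph: establishing that the dual of the boundary space with the stable filling seminorm is exactly $\widetilde{QH}(G)/H^1(G;\R)$ with the defect norm, and checking the norm comparison under the pairing. This rests on identifying $\widetilde{QH}(G)/H^1(G;\R)$ with the kernel of the comparison map $H^2_b(G;\R)\to H^2(G;\R)$ and on the duality between the stable filling norm and the defect. Rather than reconstruct this functional-analytic machinery, I would cite the detailed treatments in \cite{scl} and \cite[Section~2.5]{calbook}.
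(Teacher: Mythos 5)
The paper does not prove this statement at all: it is quoted as Bavard's theorem with a pointer to \cite{scl} and \cite[Section~2.5]{calbook}, so there is no in-paper argument to compare against. Your sketch is a correct outline of the standard proof from those references: the normalization argument for the second equality and the easy inequality $|\phi(g)|\le 2\,\scl(g)\,D(\phi)$ (via conjugation-invariance of homogeneous quasimorphisms, the bound $|\phi([a,b])|\le D(\phi)$, the telescoped estimate $|\phi(g)|\le (2\cl(g)-1)D(\phi)$, and stabilization over powers) are all complete and correct as written. The hard direction is correctly framed as Hahn--Banach duality between the stable filling seminorm on the space of boundaries and the defect norm on $\widetilde{QH}(G)/H^1(G;\R)$, but its substantive content is deferred to the same citations the paper uses; that is an acceptable level of detail here, matching what the paper itself does.
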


\noindent An important consequence of this duality is that $\scl(g) = 0$ if and only if $\phi(g) = 0$ for all non-trivial homogeneous quasimorphisms $\phi$.

The \emph{coboundary} is a linear map from the vector space $\widetilde{QH}(G)$ to the second bounded cohomology $H_b^2(G;\R)$. Coboundaries vanish exactly on the subspace $H^1(G;\R)$, and the quotient $\widetilde{QH}(G)/H^1(G;\R)$ is identified with a linear subspace of $H_b^2(G;\R)$. This subspace turns out to coincide exactly with the kernel of the \emph{forgetful map} $H_b^2(G;\R)\to H^2(G;\R)$ (see \cite{ghys} for additional details). Therefore, if a group $G$ has an infinite-dimensional space of nontrivial homogeneous quasimorphisms, then the second bounded cohomology of $G$ is also infinite-dimensional.

\subsection{Infinite-type surfaces}\label{S:ends}
Let $S$ be an infinite-type surface, i.e., a surface whose fundamental group is not finitely generated.

\begin{definition}
An \emph{exiting sequence} in \( S \) is a sequence \( \{U_n\}_{n\in\N} \) of connected, open subsets of \( S \) satisfying:
\begin{enumerate}
\item \( U_{n} \subset U_m \) whenever $m<n$,
\item $U_n$ is not relatively compact for any $n \in \N$,
\item the boundary of \(U_n \) is compact for each \( n \in \N \), and
\item any relatively compact subset of $S$ is disjoint from all but finitely many of the $U_n$’s.
\end{enumerate}
Two exiting sequences \( \{U_n\}\) and \( \{V_n\} \) are \emph{equivalent} if for every \( n \in \N \) there exists \( m \in \N \) such that \( U_m \subset V_n \) and \( V_m \subset U_n \). We denote the equivalence class of an exiting sequence by $[\{U_n\}]$ and call such an equivalence class an \emph{end} of \( S \). An end is \emph{planar} if there exists an $i$ such that $U_i$ is homeomorphic to an open subset of $\mathbb{R}^2$ and is \emph{nonplanar}, or \emph{accumulated by genus}, if every $U_i$ has infinite genus.

\end{definition}

The space of ends of $S$, $E = E(S)$, is equipped with a natural topology for which $E$ is a totally disconnected, compact, and second countable topological space (and thus a closed subset of the Cantor set; see \cite[Proposition~5]{Ric63}). By work of Richards \cite{Ric63}, an orientable, boundaryless, infinite-type surface, $S$, is completely classified by its (possibly infinite) genus, its space of ends, $E = E(S)$, and the (closed) subset of ends which are accumulated by genus, $E^G = E^G(S)$.  A pair $(E,E^G)$, where $E$ is a closed subset of the Cantor set and $E^G$ is a closed subset of $E$, is called an \emph{end space} and can be realized as the space of ends of some surface \cite[Theorem~2]{Ric63}.

To describe the topology on $E$, let \( V \) be an open subset of \( S \) with compact boundary, and define \[ \widehat V = \left\{ \left[\{U_n\}\right]\in E \mid U_m \subset V \text{ for some } m\in \N\right\}.\] Let \( \mathcal V = \{ \widehat V \mid V \subset S \text{ is open with compact boundary}\} \). 
The set \( E \) becomes a topological space by declaring \( \mathcal V \) a basis for the topology. 
Note that the definition of this topology implies that if a subset $E' \subset E$ is clopen, then there exists a simple closed curve $\gamma$ such that $S\setminus\gamma$ has two connected components, one whose end space is exactly $E'$ and one whose end space is $E\setminus E'$. 

\begin{convention*}
We will denote subsets of ends simply by capital roman letters, like $A$, which will always refer to the \emph{pair} of subspaces $(A,A\cap E^G)$ of the pair $(E,E^G)$. Two such subspaces $A$ (identified with $(A,A\cap E^G)$) and $B$ (identified with $(B,B\cap E^G)$) will be considered to be homeomorphic exactly when the pairs $(A,A\cap E^G)$ and $(B,B\cap E^G)$ are homeomorphic. We also use the convention that a \emph{neighborhood} of a point in $E$ will always refer to a \emph{clopen} neighborhood in $E$. Lastly, when we refer to a component of $S\setminus\gamma$ we mean its closure in $S$ in order to avoid introducing a new end in the component. 
\end{convention*}

\subsection{Coarse boundedness and self-similar end spaces}\label{S: CBEnds}

Given any surface $S$, the \emph{mapping class group} of $S$, $\Map(S)$, is the group of orientation-preserving homeomorphisms of $S$ up to isotopy. For a finite-type surface, the mapping class group is finitely generated, while the mapping class group of an infinite-type surface is not. In either setting, the mapping class group is equipped with the induced topology from the compact-open topology of the space of homeomorphisms of $S$. With this topology, the mapping class group of an infinite-type surface, or big mapping class group, is a Polish topological group but is neither locally compact nor compactly generated (see \cite[Proposition~12.4.1 and Theorem~12.4.2]{AraVla20}). This fact gives one of the connections between big mapping class groups and descriptive set theory and has been used to prove a variety of results about these groups.

We now record a fact about topological groups which will also be used to prove \Cref{T:SCLContinuous}.

\begin{lemma}\label{L:OpenClosedSubgroup}
Let $G$ be a topological group and let $H$ be a subgroup of $G$. If $H$ contains a subset $U$ which is open in $G$, then $H$ is both open and closed in $G$.
\end{lemma}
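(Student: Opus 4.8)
The plan is to exploit the homogeneity of a topological group: left multiplication by any fixed element is a homeomorphism, so an open set can be slid around to witness openness at every point of $H$. The argument splits into two standard moves — first deduce that $H$ is open from the fact that it has nonempty interior, and then invoke the general principle that an open subgroup of a topological group is automatically closed. I would first reduce to the case where the open set lies near the identity: fixing any $u_0 \in U$ (the set $U$ is implicitly nonempty, as otherwise the hypothesis carries no information), the translate $V = u_0\inv U$ is open, contains $e = u_0\inv u_0$, and satisfies $V \subseteq u_0\inv H = H$ since $u_0 \in U \subseteq H$. Thus $H$ contains an open neighborhood of the identity.

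To show $H$ is open, I would cover it by translates of $V$. For each $h \in H$, the set $hV$ is open, being the image of the open set $V$ under the homeomorphism $x \mapsto hx$; it contains $h = h\cdot e$; and it is contained in $HH = H$ because $h \in H$, $V \subseteq H$, and $H$ is a subgroup. Hence $H = \bigcup_{h \in H} hV$ is a union of open sets and is therefore open.

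For closedness, I would use the coset decomposition of the complement. Writing $G \setminus H = \bigcup_{g \notin H} gH$, each coset $gH$ is open, being a homeomorphic translate of the open set $H$; consequently $G \setminus H$ is open, so $H$ is closed. I expect no genuine obstacle here: the lemma is a classical fact about topological groups, and the only points requiring minor care are the tacit nonemptiness of $U$ and the bookkeeping that translating by elements of $H$ keeps everything inside $H$, which relies on $H$ being a subgroup rather than merely a subset.
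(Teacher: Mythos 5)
Your proof is correct and follows essentially the same route as the paper's: cover $H$ by open translates of the given open set (the paper writes $H = HU = \bigcup_{h\in H} hU$ directly, whereas you first normalize $U$ to a neighborhood $V$ of the identity, a cosmetic difference), then observe that $G\setminus H$ is a union of open cosets. Your remark about the tacit nonemptiness of $U$ applies equally to the paper's argument and is the only hypothesis genuinely needed beyond what is stated.
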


\begin{proof}
As $H$ is a subgroup of $G$ and $U$ is a subset of $H$, we have that $$H = HU = \bigcup_{h\in H}hU.$$ Since we can write $H$ as a union of open sets, $H$ is open in $G$. Now, note that the complement $G \setminus H$ is a union of cosets of $H$. Since each coset of $H$ is open in $G$, this implies that $G \setminus H$ is open, and hence $H$ is also closed in $G$.
\end{proof}

Many of the standard tools of geometric group theory were originally designed for studying finitely generated groups, and these tools have natural analogs in the setting of topological groups that are locally compact and compactly generated. However, these analogs are still insufficient for understanding the coarse geometry of big mapping class groups. In \cite{Ros14}, Rosendal introduces the following geometric notion which is weaker than compactness and which was used by Mann--Rafi in \cite{ManRaf20} to study the large-scale geometry of certain big mapping class groups.

\begin{definition}[Coarsely bounded]
Let $G$ be a Polish topological group. We say that a subset $A\subseteq G$ is \emph{coarsely bounded}, or \emph{CB}, in $G$ if $A$ has finite diameter in every compatible left-invariant metric on $G$. $G$ is \emph{locally coarsely bounded}, or \emph{locally CB}, if there exists an open neighborhood of the identity in $G$ which is CB, and $G$ is \emph{CB generated} if $G$ is algebraically generated by a $CB$ subset.
\end{definition}

\noindent A key utility of this generalization of compactness is that Polish groups which are locally CB and which have a CB generating set have a well-defined quasi-isometry type (see \cite{Ros14}). 

Mann--Rafi give a classification of surfaces whose mapping class groups are locally CB in \cite{ManRaf20}. A key tool in their classification is the following partial order on the space of ends that captures the local complexity of an end (see \cite[Section~4]{ManRaf20}).

\begin{definition}
Given an end space $(E, E^G)$, define a preorder $\preccurlyeq$ on $E$ as follows. For $x,y\in E$, we say $x\preccurlyeq y$ if for any neighborhood $U$ of $y$, there exists a neighborhood $V$ of $x$ such that $U$ contains a homeomorphic copy of $V$. We say that $x$ and $y$ are \emph{equivalent}, or \emph{of the same type}, if $x\preccurlyeq y$ and $y\preccurlyeq x$ and denote this by $x \sim y$. The relation $\sim$ is an equivalence relation on $E$. 
\end{definition}

For $x\in E$, we let $E(x) = \{y\in E \mid y\sim x\}$ denote the equivalence class of $x$. From this, it follows that the relation $\prec$ defined by $E(x) \prec E(y)$ if $x\preccurlyeq y$ and $x\not\sim y$ is a partial order on the set of equivalence classes of ends. 

A point $x\in E$ is said to be \emph{maximal} if $E(x)$ is maximal with respect to the partial order. Denote the set of maximal elements in $E$ by $\mathcal M(E)$. Moreover, if $A$ is a clopen subset of $E$, we let $\mathcal{M}(A)$ denote the set of elements of $A$ which are maximal among all elements of $A$. In \cite{ManRaf20}, Mann--Rafi show the following with regard to this partial order.

\begin{proposition}[Mann--Rafi \cite{ManRaf20}, Proposition 4.7] 
Given an end space $(E, E^G)$, the set of maximal elements, $\mathcal M(E)$, with respect to the partial order $\prec$ is non-empty. Furthermore, for each $x\in \mathcal{M}(E)$, the equivalence class $E(x)$ is either finite or a Cantor set. 
\end{proposition}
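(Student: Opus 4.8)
The plan is to isolate a single elementary observation about how the preorder $\preccurlyeq$ interacts with convergence in the compact space $E$, and then run two essentially separate arguments on top of it: a compactness/Zorn argument for non-emptiness, and a closedness-plus-homogeneity argument for the dichotomy. The observation, which I will call \emph{limit domination}, is this: if $w_n \to w$ in $E$ and $a \in E$ satisfies $a \preccurlyeq w_n$ for all large $n$, then $a \preccurlyeq w$. This is immediate from the definitions: given any clopen neighborhood $U$ of $w$, convergence places some $w_n$ inside $U$, so $U$ contains a clopen neighborhood $W$ of that $w_n$, and since $a \preccurlyeq w_n$ the set $W$, hence $U$, contains a homeomorphic copy of a neighborhood of $a$. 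I would prove this first, since both halves of the proposition rest on it.

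For non-emptiness, I would pass to the partial order induced by $\preccurlyeq$ on the set of $\sim$-classes and apply Zorn's lemma, so the real task is to bound an arbitrary chain. Given a chain $C$ of classes with no maximum, for each $w \in C$ the strict up-set $\{c \in C : w \prec c\}$ is nonempty, and because $C$ is totally ordered this family has the finite intersection property; by compactness of $E$ the closures of these up-sets therefore share a common point $p$. For each fixed $w$, choosing points $c_k$ of the up-set with $c_k \to p$ and applying limit domination (with $a = w$, using $w \preccurlyeq c_k$) yields $w \preccurlyeq p$, so the class of $p$ is an upper bound for $C$. Zorn's lemma then produces a maximal class, i.e. $\mathcal{M}(E) \neq \emptyset$. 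I expect this chain-bounding step to be the main obstacle: it is tempting but \emph{false} that a limit dominates the approximating sequence pointwise, and the correct move is the finite-intersection argument above, which extracts a limit of the upper part of the chain rather than of the chain itself.

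For the dichotomy, fix a maximal $x$. The first key point is that $E(x)$ is \emph{closed}, and this is precisely where maximality enters: if $y_n \in E(x)$ with $y_n \to y$, then $x \preccurlyeq y_n$ for every $n$ (same type), so limit domination gives $x \preccurlyeq y$, and maximality upgrades this to $x \sim y$, placing $y \in E(x)$. Hence $E(x)$ is a closed, totally disconnected, metrizable subspace of the Cantor set, so it is compact, and it remains only to decide whether it has isolated points. Here I would record a second elementary fact, \emph{germ invariance}: any homeomorphism of pairs $h \colon (V, V \cap E^G) \to (V', V' \cap E^G)$ between clopen subsets preserves $\sim$-classes pointwise, since $h$ and $h^{-1}$ transport arbitrarily small neighborhoods and $\preccurlyeq$ is determined by such neighborhoods.

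Finally, I would run the homogeneity argument. Suppose some $y \in E(x)$ is isolated in $E(x)$, witnessed by a clopen $U \ni y$ with $U \cap E(x) = \{y\}$. For an arbitrary $z \in E(x)$, the relation $z \preccurlyeq y$ furnishes a neighborhood $V$ of $z$ together with an embedding of $V$ into $U$; by germ invariance this embedding sends $E(x)$-points to $E(x)$-points, so $V \cap E(x)$ injects into $U \cap E(x) = \{y\}$, forcing $V \cap E(x) = \{z\}$. Thus a single isolated point makes every point of $E(x)$ isolated, so $E(x)$ is discrete, and being closed in the compact set $E$ it is finite. In the remaining case $E(x)$ has no isolated points; being nonempty, compact, metrizable, totally disconnected, and perfect, it is a Cantor set by Brouwer's characterization. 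This exhausts the two cases and completes the dichotomy.
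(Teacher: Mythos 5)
The paper does not actually prove this proposition---it is imported verbatim from Mann--Rafi \cite{ManRaf20}---so there is no in-paper argument to compare against. Your proof is correct and essentially reproduces the original route: your ``limit domination'' lemma is the closedness of the up-sets $\{z\in E : a\preccurlyeq z\}$, which feeds both the Zorn/compactness bound on chains (where one should read each strict up-set of the chain as a subset of $E$, i.e.\ as the union of the corresponding equivalence classes, so that compactness and sequence extraction apply) and the closedness of $E(x)$ for maximal $x$; the finite-versus-Cantor dichotomy then follows correctly from your germ-invariance observation (one isolated point of $E(x)$ forces all points of $E(x)$ to be isolated) together with Brouwer's characterization of the Cantor set.
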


\begin{remark}  \label{R:isolated}
Note that the set of isolated planar ends of $S$ forms a single equivalence class in the partial order. If this set is finite, then all of the isolated planar ends are maximal in the partial order. However, if this set is infinite, then the isolated planar ends accumulate onto some other end $x\in E$. In this case, $y \prec x$ for any isolated planar end $y$, and so no isolated planar end is maximal with respect to the partial order.
\end{remark}

Now that we have established the basic properties of the partial order on the space of ends, we need to introduce the notion of ``self-similarity'' in order to state the Mann--Rafi classification of locally CB mapping class groups.

\begin{definition} 
The pair $(E,E^G)$ is \emph{self-similar} if for any decomposition $E=E_1\sqcup E_2 \sqcup \ldots \sqcup E_n$ into pairwise disjoint clopen sets, there exists a clopen subset $D$ of some $E_i$ such that the pair $(D,D\cap E^G)$ is homeomorphic to $(E,E^G)$.
\end{definition}

\noindent Note that this definition also applies to any clopen subset of $E$.

\begin{remark}\label{R:maximal}
We note that when $A$ is a self-similar, clopen subset of $E$, $\mathcal{M}(A)$ is either a single point or a Cantor set of points all of the same type (see \cite[Proposition~4.8]{ManRaf20}). 
\end{remark}

Related to the notion of self-similarity is the following notion of a  ``stable'' neighborhood, which one can think of as a local version of self-similarity.

\begin{definition} 
Let $x$ be an end of $S$. A neighborhood $U$ of $x$ in $E$ is called \emph{stable} if for any smaller neighborhood $V$ of $x$ in $E$, $V$ contains a homeomorphic copy of $U$.
\end{definition}

Finally, we define the following open neighborhoods of the identity in $\Map(S)$. By a subsurface of $S$, we mean a connected, embedded, essential surface $K \subset S$. We call $K\subset S$ \emph{essential} if none of the boundary components of $K$ bound disks in $S$. Notably, we allow boundary components of $K$ to bound once-punctured disks. This usage is somewhat non-standard but makes the classification theorem of Mann--Rafi easier to state. 

\begin{definition}
If $K\subset S$ is a finite-type subsurface of $S$, we define \[\mathcal{V}_K=\{g\in \Map(S) \mid g \text{ restricts to the identity on } K\}.\]
\end{definition}

\noindent The sets $\mathcal{V}_K$ are open neighborhoods of the identity in $\Map(S)$ equipped the compact-open topology. Moreover, these sets form a neighborhood basis at the identity. We can now state the classification theorem for locally CB mapping class groups of Mann--Rafi \cite{ManRaf20}.

\begin{theorem}[Mann--Rafi \cite{ManRaf20}, Theorem 5.7]\label{MannRafiTh5.7}
The group $\Map(S)$ is locally CB if and only if there is a finite-type subsurface $K$ such that the complementary subsurfaces of $K$ each have one or infinitely many ends, 
have genus 0 or infinity, and partition $E$ into finitely many clopen subsets \[E=\left( \bigsqcup_{A\in \mathcal A} A \right) \sqcup \left(\bigsqcup_{P\in \mathcal P} P\right)\] with the following properties.
\begin{enumerate}
    \item Each $A \in \mathcal A$ is self-similar, $\mathcal M(A)\subset \mathcal M(E)$, and $\mathcal M(E)=\bigsqcup_{A\in \mathcal A}\mathcal M(A)$.
    \item Each $P\in \mathcal P$ is homeomorphic to a clopen subset of some $A\in \mathcal A$.
    \item For any $x_A\in \mathcal M(A)$ and any neighborhood $V$ of the end $x_A$ in $S$, there is $f_V\in \operatorname{Homeo}(S)$ so that $f_V(V)$ contains the complementary region to $K$ with end set $A$.
\end{enumerate}

\noindent Moreover, in this case, $\mathcal{V}_K$ is a CB neighborhood of the identity, and $K$ may always be taken to have genus zero if $S$ has infinite genus and genus equal to that of $S$ otherwise.
\end{theorem}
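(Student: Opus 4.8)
The plan is to reduce the theorem to a criterion of Rosendal for coarse boundedness and then to verify that criterion using self-similarity of the end space. The first observation is that each $\mathcal{V}_K$ is an open \emph{subgroup} of $\Map(S)$ (if two homeomorphisms restrict to the identity on $K$, then so do their product and their inverses), and that these subgroups form a neighborhood basis at the identity. Hence $\Map(S)$ is locally CB if and only if $\mathcal{V}_K$ is CB for some finite-type $K$, and the whole theorem becomes a statement about when a stabilizer $\mathcal{V}_K$ is coarsely bounded. To test this I would use Rosendal's criterion \cite{Ros14}: a subset $A$ of a Polish group is CB if and only if for every identity neighborhood $V$ there are a finite set $F$ and an integer $n$ with $A \subseteq (FV)^n$. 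Since the $\mathcal{V}_{K'}$ with $K' \supset K$ form a neighborhood basis, it suffices to establish, for each such $K'$, a bound $\mathcal{V}_K \subseteq (F\,\mathcal{V}_{K'})^n$ with $F$ finite.

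For the sufficiency direction ($\Leftarrow$), and simultaneously the ``moreover'' that $\mathcal{V}_K$ is CB, I would fix $K' \supset K$ and an arbitrary $g \in \mathcal{V}_K$. Because $g$ fixes $K$ pointwise it preserves every complementary region of $K$, so it factors as a commuting product $g = \prod_R g_R$ over the finitely many complementary regions $R$, where $g_R$ is supported in $R$; it therefore suffices to absorb each factor into a bounded word in $F\,\mathcal{V}_{K'}$. For a region $S_A$ with self-similar end set $A$, the engine is property (3): choosing a neighborhood $V$ of a maximal end $x_A \in \mathcal{M}(A)$ lying in the part of $S_A$ outside $K'$, the expansion $f_V$ carries $V$ onto a subsurface containing all of $S_A$, and conjugating by a fixed homeomorphism built from $f_V$ (depending only on $K$ and $K'$) pushes the complexity of $g_{S_A}$ off of $K'$. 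A fragmentation-and-absorption argument then writes $g_{S_A}$ as a product of boundedly many elements of $F\,\mathcal{V}_{K'}$, with $F$ and the length bound independent of $g$. A region $S_P$ with $P \in \mathcal{P}$ is handled the same way after embedding $P$ into the dominating self-similar piece $A$ supplied by property (2).

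For the necessity direction ($\Rightarrow$) I would argue by contraposition. After first enlarging a candidate $K$ so that its complementary regions have one or infinitely many ends and genus $0$ or $\infty$ (always possible for a finite-type $K$), I would show that if the resulting partition cannot be arranged to satisfy (1)--(3), then $\mathcal{V}_K$ fails to be CB for every $K$. Using the structure of maximal ends recorded above (each maximal equivalence class is finite or a Cantor set, \cite{ManRaf20}) to organize the finitely many maximal types, each obstruction to the partition --- a maximal region that is not self-similar, a failure of the expansion property (3), or maximal types that no finite-type subsurface can separate into self-similar pieces --- furnishes a sequence of mapping classes supported ever deeper in the end space whose displacement in a suitable compatible left-invariant metric grows without bound. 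This yields the desired unbounded length function and hence non--local-CB. The genus clause of the ``moreover'' is then routine: when $S$ has finite genus one enlarges $K$ to contain all of it, and when $S$ has infinite genus the genus is carried by the infinite-genus complementary regions, so $K$ may be taken planar.

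The hard part will be the sufficiency direction, and in particular the assertion that a self-similar complementary region contributes a \emph{coarsely bounded} factor. The difficulty is to upgrade the qualitative expansion property (3) into a \emph{uniform} statement, producing a single finite set $F$ and a single integer $n$ that absorb \emph{every} $g \in \mathcal{V}_K$ into $(F\,\mathcal{V}_{K'})^n$ independently of $g$. Coordinating this absorption across the several self-similar pieces $A \in \mathcal{A}$ and their subordinate pieces $P \in \mathcal{P}$ is the technical heart of the argument; by comparison, the necessity direction is mainly a matter of extracting the right unbounded displacement from a structural failure of the partition.
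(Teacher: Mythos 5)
First, a point of order: the paper does not prove this statement at all. It is quoted verbatim as Theorem 5.7 of Mann--Rafi \cite{ManRaf20} and used as a black box, so there is no internal proof to compare yours against. Judged on its own terms, your outline of the sufficiency direction has the right skeleton: reducing to Rosendal's criterion via the observation that the $\mathcal{V}_K$ are open subgroups forming a neighborhood basis is correct, and decomposing $g \in \mathcal{V}_K$ into commuting factors supported on the complementary regions is the right first move. In fact the absorption for a self-similar region is cleaner than you suggest: fix once and for all a neighborhood $V$ of a maximal end of $A$ disjoint from $K'$, and let $f_V$ be the homeomorphism from property (3), so that $f_V(V)$ contains the region $S_A$. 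Then for any $g$ supported in $S_A$, the conjugate $f_V^{-1} g f_V$ is supported in $f_V^{-1}(S_A) \subseteq V$ and hence already lies in $\mathcal{V}_{K'}$; thus $g \in f_V\, \mathcal{V}_{K'}\, f_V^{-1}$ with $f_V$ independent of $g$, and no fragmentation is needed. The $\mathcal{P}$-regions do require a step you gloss over: to conjugate a class supported on the region with end set $P$ into the deep part of the dominating $A$, you need a homeomorphism of all of $S$, which requires matching the complements as well as the pieces (compare Claim 3 in this paper's proof of \Cref{T:BoundedCommutatorLength}, which does exactly this change of coordinates).

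The genuine gap is the necessity direction, which in your proposal is essentially a restatement of the goal. Saying that each structural failure ``furnishes a sequence of mapping classes \ldots whose displacement in a suitable compatible left-invariant metric grows without bound'' names neither the metric, nor the length function, nor the mechanism by which a failure of self-similarity or of property (3) produces unboundedness; exhibiting such a witness (an unbounded continuous length function, an isometric action with unbounded orbits, or a direct violation of Rosendal's criterion) from the combinatorics of the preorder on ends is the bulk of Mann--Rafi's argument. Note also that the forward implication asserts more than a dichotomy: it says that if $\Map(S)$ is locally CB then a single finite-type $K$ realizing \emph{all} of properties (1)--(3) simultaneously exists, which requires a constructive organization of the maximal equivalence classes rather than a contrapositive. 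As written, your proposal does not establish either half at the level of detail required; for the purposes of this paper, citing \cite{ManRaf20} is the correct move.
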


\section{Proof of \Cref{T:BoundedCommutatorLength}}\label{S:ProofOf1.3}

In this section, we prove \Cref{T:BoundedCommutatorLength}, which we use to prove \Cref{T:SCLContinuous} in the next section. Recall \Cref{R:maximal} which tells us that when $A$ is a clopen subset of $E$ that is self-similar, $\mathcal{M}(A)$ consists either of a single point or a Cantor set of ends of the same type. We will need the following lemma about sets of the second kind. 

\begin{lemma}\label{L:SelfSimilarSubsetsHomeo}
Let $A$ be a clopen subset of $E(S)$ which is self-similar and contains a Cantor set of maximal ends (necessarily of the same type). Then, for any decomposition $A = A_1 \sqcup A_2$ into disjoint, clopen subsets such that each $A_i$ contains some elements of $\mathcal{M}(A)$, we have that $A_i$ is homeomorphic to $A$, for $i = 1,2$.
\end{lemma}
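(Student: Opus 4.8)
The plan is to prove the two homeomorphisms $A_1 \cong A$ and $A_2 \cong A$ by the same argument, so I will focus on $A_1$. The overall strategy is a Cantor--Bernstein--type argument: I will exhibit clopen embeddings in both directions between $A_1$ and $A$ and then upgrade them to an honest homeomorphism of pairs. One direction is free, since $A_1$ is by hypothesis a clopen subset of $A$ and hence includes into $A$ as a clopen subset (respecting $E^G$). The content is therefore to embed $A$ as a clopen subset of $A_1$ and then to promote the two embeddings to a homeomorphism.

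For the embedding $A \hookrightarrow A_1$, I would use self-similarity together with the fact that $A_1$ contains a maximal end. The key intermediate claim is that every clopen neighborhood $U$ of a point $x \in \mathcal{M}(A)$ contains a clopen copy of $A$; in fact $x$ should admit a neighborhood basis of clopen sets, each containing $x$ and homeomorphic as a pair to $A$. This is where the hypotheses do their work: since $\mathcal{M}(A)$ is a Cantor set of ends all of the same (necessarily maximal) type by \Cref{R:maximal}, the type of $x$ is the unique $\preccurlyeq$-maximal type in $A$, and self-similarity lets one locate homeomorphic copies of $A$ arbitrarily close to $x$ (compare property (3) of \cite[Theorem 5.7]{ManRaf20} and \cite[Proposition 4.8]{ManRaf20}). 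Applying this with $U = A_1$, a clopen neighborhood of the maximal end it contains, produces a clopen $D_1 \subseteq A_1$ with $D_1 \cong A$, and symmetrically a clopen $D_2 \subseteq A_2$ with $D_2 \cong A$.

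To finish, I would run a Cantor--Bernstein shift. Fix a maximal end $x \in A_1$ and, using the neighborhood basis of copies above, choose a self-embedding $g \colon A \to A_1$ onto a clopen copy that contracts toward $x$, in the sense that $g(x) = x$ and the nested images shrink to a point, $\bigcap_n g^n(A) = \{x\}$. Writing $A_2 = A \setminus A_1$ and $P = \bigsqcup_{n \geq 0} g^n(A_2)$, define $h \colon A \to A_1$ to equal $g$ on $P$ and the identity on $A \setminus P$. A short check shows $h$ is a bijection of $A$ onto $A_1$ which respects $E^G$ (it is built from the pair-homeomorphism $g$ and the identity).

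The main obstacle is continuity: the set-theoretic Cantor--Bernstein bijection need not be a homeomorphism, and indeed topological Cantor--Bernstein can fail for compact, totally disconnected spaces without extra structure. Here $P$ is open but not closed, so the only possible failure of continuity for $h$ occurs at accumulation points of $P$; the contraction arranged above forces $\overline{P} \setminus P \subseteq \bigcap_n g^n(A) = \{x\}$, and continuity at $x$ then follows because for $p \in P$ near $x$ both $p$ and $g(p)$ lie in $g^n(A)$ for large $n$ and hence tend to $x$. This is exactly the step for which self-similarity is essential, since it is what supplies copies of $A$ in arbitrarily small neighborhoods of $x$ and hence the contracting embedding $g$; getting such a genuinely contracting $g$ (rather than one whose iterates stabilize on a larger invariant set) is the delicate point I expect to require the most care. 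Once $h$ is continuous, it is a continuous bijection from a compact space to a Hausdorff space and therefore a homeomorphism, giving $A_1 \cong A$; the identical argument applied to $A_2$ completes the proof.
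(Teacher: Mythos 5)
Your Cantor--Bernstein endgame is sound as far as it goes: given a clopen pair-embedding $g\colon A\to A_1$ with $\bigcap_n g^n(A)=\{x\}$, the map $h$ you describe is indeed a continuous bijection from a compact space to a Hausdorff space, hence a homeomorphism $A\to A_1$. The genuine gaps are the two inputs you defer, and they are exactly where all the content of the lemma lives. First, the claim that every clopen neighborhood of a maximal end $x$ contains a clopen copy of $A$ \emph{containing $x$} (indeed, a neighborhood basis of such copies, realized by homeomorphisms fixing $x$) does not follow from the definition of self-similarity: applying self-similarity to $A=U\sqcup(A\setminus U)$ only produces a copy of $A$ inside \emph{one} of the two pieces, with no control over which piece, let alone over whether the copy contains $x$ or admits a pair-homeomorphism to $A$ fixing $x$. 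The references you point to do not supply this: \cite[Proposition 4.8]{ManRaf20} gives only the singleton-or-Cantor-set dichotomy for $\mathcal{M}(A)$, and property (3) of \Cref{MannRafiTh5.7} concerns the distinguished pieces of a Mann--Rafi decomposition of a locally CB surface and produces a copy of $A$ \emph{somewhere} in a neighborhood of $x$, not one containing $x$; the lemma, by contrast, is a purely end-space statement about an arbitrary self-similar clopen set. Second, even granting such a neighborhood basis, you must still produce a \emph{single} embedding $g$ whose iterates contract to $\{x\}$: an arbitrary homeomorphism $\phi\colon A\to D_1$ onto a small copy, even one fixing $x$, can have $\bigcap_n\phi^n(A)\supsetneq\{x\}$ because the iterated images may stabilize on a larger invariant set (exactly the failure mode you name), and the natural repair --- building $g$ as a shift on the annuli $N_n\setminus N_{n+1}$ of a nested basis --- requires knowing those annuli are pairwise homeomorphic, which is essentially the lemma again.

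For comparison, the paper's proof sidesteps both issues by never producing copies of $A$ near $x$. It invokes Mann--Rafi's Lemmas 5.6 and 4.17 to show that every maximal end of $A$ has a \emph{stable} neighborhood $U\subseteq A_1$ (small neighborhoods contain copies of the fixed set $U$, not of $A$), then uses their Lemma 4.18 together with compactness of $A_2$ to absorb $A_2$ into $U$, giving $U\sqcup A_2\cong U$ and hence $A\cong(A_1\setminus U)\sqcup(U\sqcup A_2)\cong A_1$. To salvage your route you would need to import that same stable-neighborhood machinery to justify your two inputs, at which point the absorption argument is the shorter path.
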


\begin{proof}
Let $A$, $A_1$, and $A_2$ be as in the statement of the lemma. 
For each $i = 1, 2$, fix a maximal end $x_i\in \mathcal{M}(A_i)$. The proof of Lemma 5.6 in \cite{ManRaf20} shows that for some $i$, $A_i$ is a stable neighborhood of $x_i$. 
Since $\mathcal{M}(A)$ consists of ends which are all of the same type and $x_i\in \mathcal{M}(A)$, we have that $\mathcal{M}(A) \subset E(x_i)$.
So, by Lemma 4.17 of \cite{ManRaf20}, this implies that every maximal point of $A$ has a stable neighborhood. 

We will first show that $A_1$ is homeomorphic to $A$. To that end, fix a stable neighborhood $U$ of $x_1$, which we know exists from the discussion in the previous paragraph. As sub-neighborhoods of stable neighborhoods are stable (by definition of stability), we may choose our stable neighborhood $U$ so that $U\subset A_1$. Since $\mathcal M(A) \subset E(x_1)$, we have that $y \prec x_1$ or $y \sim x_1$ for all $y\in A_2$. So, $x_1$ is an accumulation point of $E(y)$ by definition of the partial order. 
Thus, Lemma~4.18 of \cite{ManRaf20} tells us that for each $y\in A_2$, we may choose a sufficiently small neighborhood $V_y\subset A_2$ of $y$ so that $U \cup W \cong U$ for any neighborhood $W \subseteq V_{y}$ of $y$. These sets $V_y$ form a cover of $A_2$, and since $A_2$ is compact, there must be a finite sub-cover $\{V_{y_1}, V_{y_2}, \ldots V_{y_k}\}$ of $A_2$. We now will alter these sets $V_{y_i}$ so that they are disjoint. See \Cref{stable_nbhds} for reference.

\begin{figure}[h]
\centering 
\def\svgwidth{0.6\textwidth}
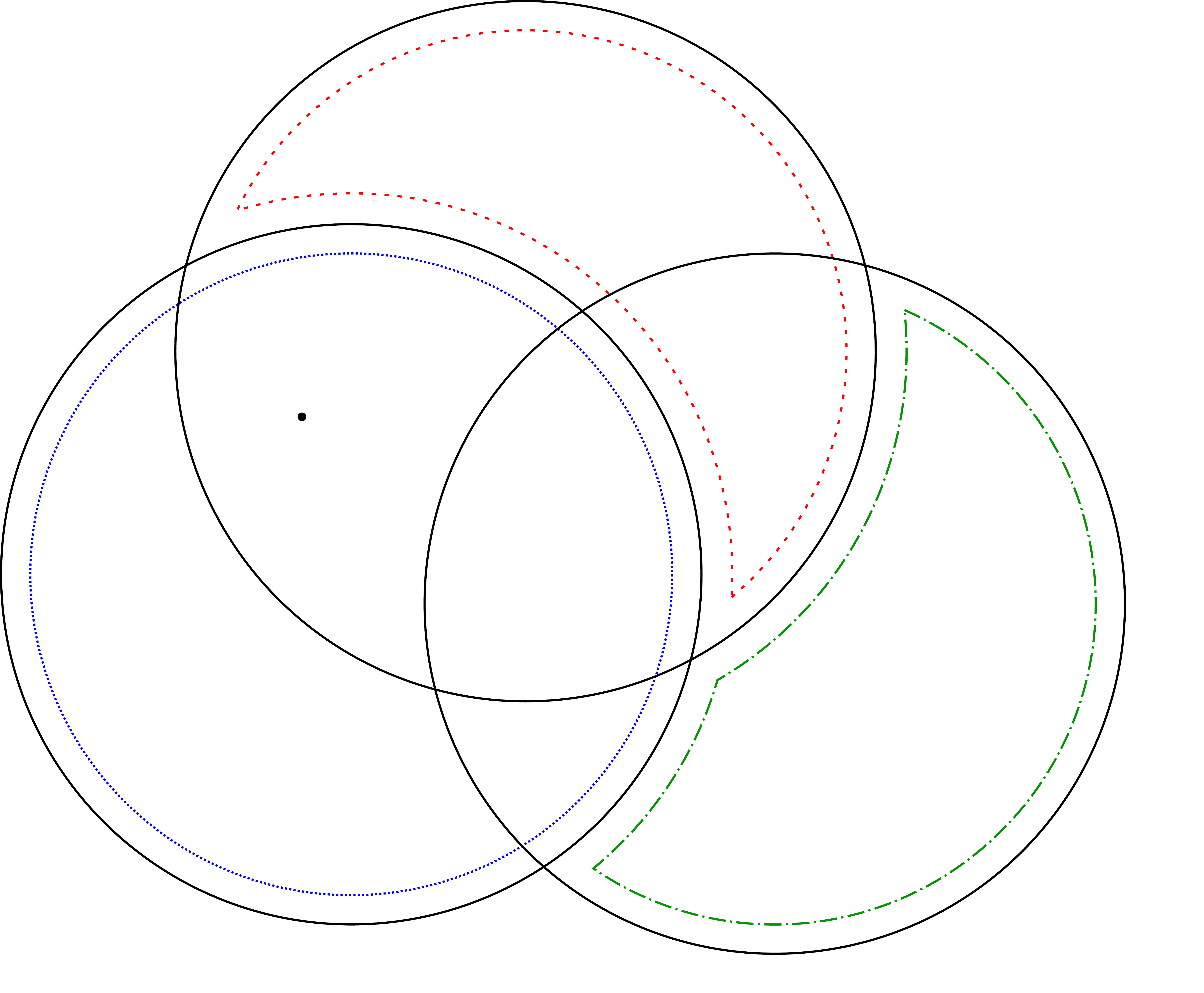
\caption{The neighborhoods $V_i''$. The disjoint neighborhoods $W_i$ are indicated by small circles. A pair of nested circles indicates that a neighborhood $W_i$ has been excised from some $V_j$ and added to $V_i$.}
\label{stable_nbhds}
\end{figure}

Denote the set $V_{y_i}$ simply by $V_i$ for ease of notation. We note that the sets $V_i$ may intersect and, in particular, $y_i$ may lie in a set $V_j$ for some $j\neq i$. To remedy this, we choose smaller neighborhoods $W_i$ of $y_i$ satisfying:
\begin{itemize}
    \item[(a)] the sets $W_i$ are pairwise disjoint, and
    \item[(b)] each set $W_i$ is contained in $V_i$.
\end{itemize}

Notice that the sets \[V_i'=V_i\setminus \bigcup_{j\neq i} W_j\] still cover $A_2$, although they may not be disjoint. Moreover, $V_i'$ contains $y_i$ and does not contain $y_j$ for $j\neq i$. Finally then, the sets \[V_i''=V_i' \setminus \bigcup_{j< i} V_j'\] are disjoint, clopen, and cover $A_2$. Moreover, $V_i''$ is a neighborhood of $y_i$ contained in $V_i$. Thus, we have $U\cup V_i''\cong U$ for each $i$. We use this inductively as follows:

\begin{align*}
    U\sqcup A_2 &= U \sqcup \left(V_1'' \sqcup \cdots \sqcup V_k'' \right) \\
    &\cong (U \sqcup V_1'') \sqcup (V_2'' \sqcup \cdots \sqcup V_k'') \\
    &\cong U \sqcup (V_2'' \sqcup \cdots \sqcup V_k'') \\
    &\ \vdots \\
    &\cong U \sqcup V_k'' \\
    &\cong U.
\end{align*}
Since we chose the stable neighborhood $U$ such that $U \subseteq A_1$, we have $A_1 = (A_1 \setminus U) \sqcup U$. Therefore, \[ A \cong A_1 \sqcup A_2 \cong (A_1 \setminus U) \sqcup (U \sqcup A_2) \cong (A_1 \setminus U) \sqcup U \cong A_1.\] 
Hence, we have shown that $A_1$ is homeomorphic to $A$. By repeating the argument above interchanging the roles of $A_1$ and $A_2$, we see that $A_2$ is also homeomorphic to $A$, as desired.
\end{proof}

A consequence of the proof above is: 

\begin{corollary}\label{C:PinA}
Let $A$ be a clopen subset of $E(S)$ which is self-similar and contains a Cantor set of maximal ends (necessarily of the same type). If $P$ is homeomorphic to a clopen subset of $A$, then $A\sqcup P\cong A$.
\end{corollary}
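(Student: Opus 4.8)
The plan is to reduce the corollary to a single absorption of $P$ into a stable neighborhood sitting inside $A$, reusing wholesale the mechanism built in the proof of \Cref{L:SelfSimilarSubsetsHomeo}. The key observation is that I need not split $A$ symmetrically as in the lemma; it suffices to find one stable neighborhood $U\subseteq A$ that can swallow $P$ without changing its homeomorphism type, and then reassemble.

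First I would fix a maximal end $x_1\in\mathcal M(A)$ and, exactly as in the proof of \Cref{L:SelfSimilarSubsetsHomeo}, invoke the fact established there that every maximal point of $A$ admits a stable neighborhood. Shrinking if necessary (sub-neighborhoods of stable neighborhoods are again stable), I obtain a stable neighborhood $U\subseteq A$ of $x_1$. Next I would check that every end of $P$ lies below $x_1$ in the partial order: writing $P\cong P'$ for a clopen subset $P'\subseteq A$, and using that $\mathcal M(A)\subset E(x_1)$ forces $y\preccurlyeq x_1$ for every end $y$ of $A$, I conclude that every end of $P'$, and hence every end of $P$, satisfies $y\preccurlyeq x_1$.

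With this in hand, the absorption argument from the proof of \Cref{L:SelfSimilarSubsetsHomeo} applies verbatim with $A_2$ replaced by $P$: cover $P$ by neighborhoods $V_y$ on which $U\sqcup W\cong U$ for every $W\subseteq V_y$ (via Lemma 4.18 of \cite{ManRaf20}, legitimate since $x_1$ accumulates onto each $E(y)$), extract a finite subcover by compactness of $P$, disjointify the $V_y$'s as before, and absorb them into $U$ one at a time to obtain $U\sqcup P\cong U$. The corollary then drops out of the single computation
\[ A\sqcup P = (A\setminus U)\sqcup U\sqcup P \cong (A\setminus U)\sqcup U = A. \]

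The main obstacle is purely bookkeeping rather than conceptual: I must confirm that the hypotheses of the absorption step genuinely transfer from a clopen subset of $A$ to the external piece $P$. Concretely, this means verifying that the homeomorphism $P\cong P'$ preserves the type of each end, so that the partial-order comparisons $y\preccurlyeq x_1$ and the accumulation of $x_1$ onto each $E(y)$ carry over from $P'$ to $P$, and that the reassembly above respects the genus subset $E^G$ since only the $U$-part is altered. Once these checks are in place, no idea beyond those already present in \Cref{L:SelfSimilarSubsetsHomeo} is needed, which is exactly why the statement is phrased as a consequence of that proof.
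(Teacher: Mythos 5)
Your proposal is correct and takes essentially the same route as the paper, which proves the corollary precisely by letting $A$ play the role of $A_1$ and $P$ play the role of $A_2$ in the absorption argument from the proof of \Cref{L:SelfSimilarSubsetsHomeo}. Your additional bookkeeping (transferring the comparisons $y\preccurlyeq x_1$ across the homeomorphism $P\cong P'$) is a sensible elaboration of a point the paper leaves implicit, but it introduces no new idea.
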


We note that this corollary follows by letting $A$ play the role of $A_1$ and $P$ play the role of $A_2$ in the proof of \Cref{L:SelfSimilarSubsetsHomeo}. In proving \Cref{T:BoundedCommutatorLength}, we will need to define a particular homeomorphism of $S$ that will be a \emph{shift map}. We use the definition by Abbott, Miller, and the second author in \cite{AbbottMillerPatel}, and note that shift maps have been defined similarly in \cite{ManRaf20}. 

\begin{definition}\label{D:shift}
Let $S'$ be the surface defined by taking the strip $\mathbb{R}\times [-1,1]$, removing a closed disk of radius $\frac{1}{2}$ with center $(n, 0)$ for $n \in \mathbb{Z}$, and attaching any fixed surface with exactly one boundary component to the boundary of each such disk. A \emph{shift} on $S'$ is the homeomorphism that acts like a translation, sending $(x,y)$ to $(x +1, y)$ for $y\in [-1+\epsilon, 1-\epsilon]$ and which tapers to the identity on $\partial S'$. See \Cref{fig:ShiftMap} for an example of such a map. 
\end{definition}

\begin{figure}
    \centering
    \includegraphics[width = 9cm]{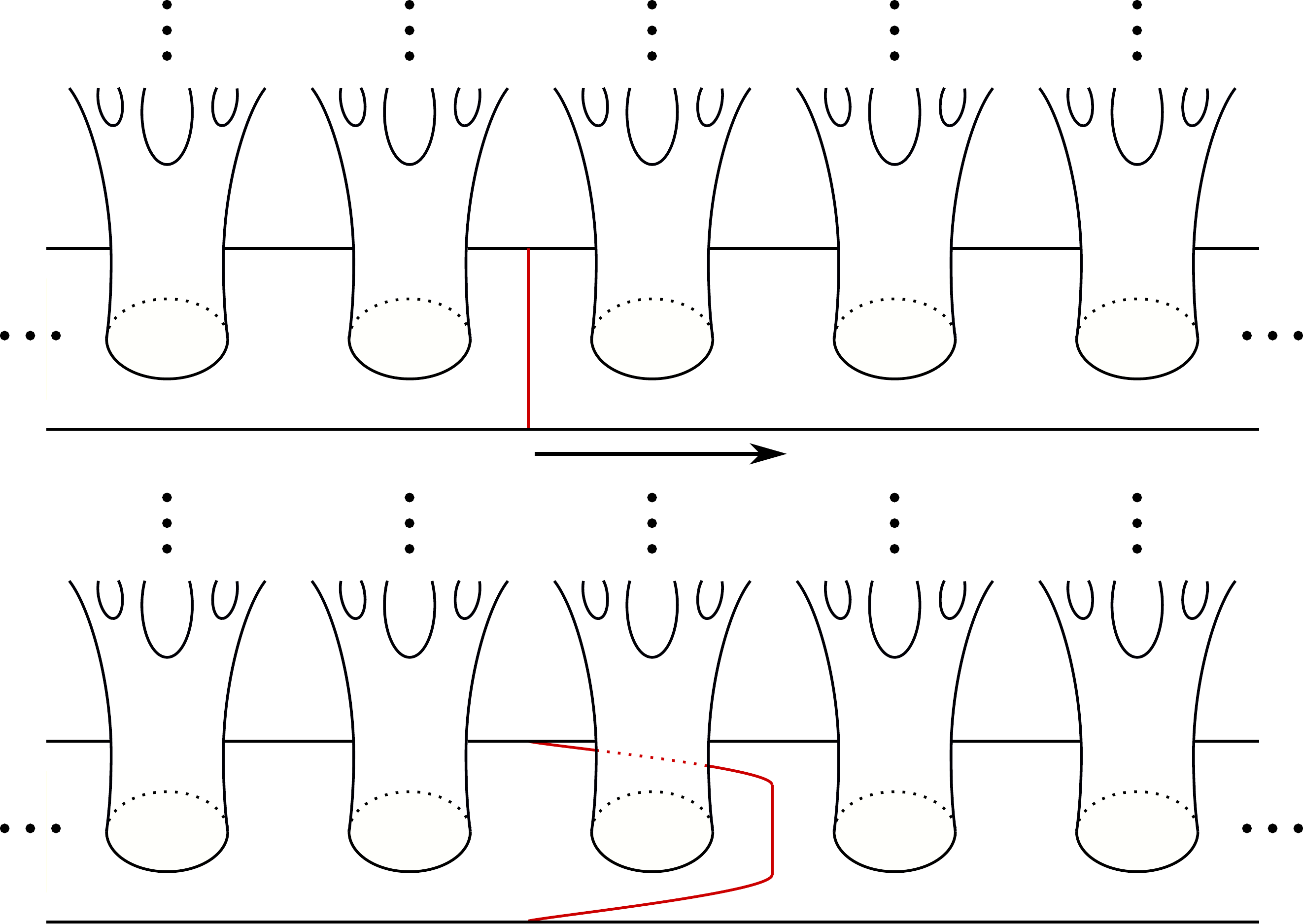}
    \caption{A \emph{shift} on a surface $S'$.}
    \label{fig:ShiftMap}
\end{figure}

Given a surface $S$ with a proper embedding of $S'$ into $S$ so that the two ends of the strip (associated to $+\infty$ and $-\infty$ in $\mathbb{R}$) correspond to two different ends of $S$, the shift on $S'$ induces a shift on $S$, where the homeomorphism acts as the identity on $S\setminus S'$. The support of the shift map on $S$ is the embedded copy of $S'$ in $S$.

\begin{convention*}
For any clopen subset $E'\subset E$, we say that a simple closed curve $\gamma$ \emph{cuts off a subsurface $V$ with end space $E'$} if $S\setminus \gamma$ has two connected components, one with end space $E'$ and the other with end space $E \setminus E'$, and the subsurface $V$ is the closure of the component with end space $E'$. 
Such a curve exists because $E'$ is clopen (see \Cref{S:ends} for details on the topology on $E$). We may assume that $V$ has zero genus if $E' \cap E^G = \emptyset$ by slightly editing our choice of $\gamma$. Therefore, we will always assume that such a curve $\gamma$ cuts off a subsurface $V$ which has zero or infinite genus. 

\end{convention*}

\begin{lemma}\label{ShiftMapTake2}

Let $A$ be a clopen subset of $E(S)$ which is self-similar and contains a Cantor set of maximal ends (necessarily of the same type). Let $V_0$ be any subsurface of $S$ with one boundary component, zero or infinite genus, and for which $E(V_0)$ is a clopen subset of $A$ containing some, but not all, maximal ends of $A$. Then, there exists a countable collection of disjoint subsurfaces $\{V_i\}_{i \in \mathbb{Z}}$ of $S$ and a homeomorphism $f$ of $S$ satisfying:
\begin{itemize}
    \item[(a)] for each $i$, the end space $E(V_i)$ is homeomorphic to $A$; and
    \item[(b)] $f(V_i)=V_{i+1}$ for each $i$.
\end{itemize}
\end{lemma}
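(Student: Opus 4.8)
The strategy is to transfer the problem to a decomposition of the end space $A$ and then to realize $f$ as one of the shift maps of \Cref{D:shift}. Write $A_0 = E(V_0)$. By hypothesis $A_0$ is a clopen subset of $A$ containing some but not all of the maximal ends, so its complement $B := A \setminus A_0$ is clopen and also contains maximal ends of $A$. Applying \Cref{L:SelfSimilarSubsetsHomeo} to the decomposition $A = A_0 \sqcup B$ (each piece meeting $\mathcal M(A)$) shows $A_0 \cong A$ and $B \cong A$; in particular each is self-similar, and by \Cref{R:maximal} the maximal ends of $B$ form a Cantor set of the same top type as those of $A$. The first step is to produce inside $B$ a bi-infinite family of pairwise disjoint clopen sets, each homeomorphic to $A$, that will serve as the end spaces of the beads of a shift strip.

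I fix two distinct maximal ends $p, q$ of $B$, which exist because the maximal ends of $B$ form a Cantor set, hence are infinite and perfect. I split $B = B_+ \sqcup B_-$ into clopen pieces with $p \in B_+$ and $q \in B_-$, each containing a Cantor set of maximal ends, so that $B_\pm \cong A$ by \Cref{L:SelfSimilarSubsetsHomeo}. Inside $B_+$ I choose nested clopen neighborhoods $B_+ = M_0 \supsetneq M_1 \supsetneq M_2 \supsetneq \cdots$ with diameters tending to $0$, arranged so that each $M_i$ and each successive difference $A_i := M_{i-1}\setminus M_i$ (for $i \geq 1$) contains a Cantor set of maximal ends of the top type. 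Applying \Cref{L:SelfSimilarSubsetsHomeo} repeatedly to the splittings $M_{i-1} = A_i \sqcup M_i$ gives $M_i \cong A$ and $A_i \cong A$ for every $i$, while by construction the $A_i$ accumulate only onto $p$. Running the symmetric construction in $B_-$ produces clopen sets $A_{-i} \cong A$ ($i\ge 1$) accumulating only onto $q$. Together with the given set $A_0$, this yields pairwise disjoint clopen sets $\{A_i\}_{i\in\mathbb Z}$, each homeomorphic to $A$, with $A_i \to p$ and $A_{-i} \to q$.

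Next I realize these sets geometrically. For each $i \neq 0$ I cut off a subsurface $V_i \subset S$ along a single boundary curve with $E(V_i) = A_i$, taking the genus of $V_i$ to be zero when $A_i \cap E^G = \emptyset$ and infinite otherwise. Since the homeomorphism $A_i \cong A_0$ is an isomorphism of pairs, it matches the genus subspaces $A_i \cap E^G$ and $A_0 \cap E^G$, so this genus agrees with that of $V_0$; Richards' classification then gives $V_i \cong V_0$, which is property (a) (the case $i=0$ being $E(V_0)=A_0\cong A$). As the $A_i$ are disjoint clopen subsets of $E(S)$, the $V_i$ may be taken pairwise disjoint and disjoint from $V_0$. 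I then assemble a shift strip as in \Cref{D:shift} by joining each $V_i$ to $V_{i+1}$ by an embedded planar band, choosing the bands of large positive index to lie inside the shrinking neighborhoods $M_i$ of $p$ and those of large negative index inside the corresponding neighborhoods of $q$; connectivity of $S$ guarantees such bands exist. The resulting union $S' = \bigcup_i V_i \cup \bigcup_i(\text{bands})$ is then a properly embedded copy of the strip of \Cref{D:shift} whose two ends are the distinct ends $p$ and $q$. Taking $f$ to be the induced shift on $S'$, extended by the identity on $S\setminus S'$, gives a homeomorphism of $S$ with $f(V_i)=V_{i+1}$, which is property (b).

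The main obstacle is this final assembly: one must verify that the beads and connecting bands form a genuinely \emph{properly} embedded strip whose two ends are precisely the two distinct ends $p$ and $q$ of $S$, rather than merely exhibiting an abstract end-space decomposition. The point is that, because the diameters of $M_i$ (and of their analogues near $q$) tend to $0$, the beads and bands of large index cluster only onto $p$ or $q$; hence $S'$ is closed in $S$, its frontier consists of two properly embedded lines, and each compact subset of $S$ meets only finitely many beads, giving properness. I would carry this out by selecting the cutting curves for the $V_i$ and the connecting bands successively within the nested neighborhoods $M_i$, so that the identification of the two strip ends with the distinct ends $p$ and $q$ (as required by \Cref{D:shift}) follows directly from the fact that these neighborhoods shrink to the single points $p$ and $q$.
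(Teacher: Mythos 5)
Your proposal is correct and follows essentially the same route as the paper's proof: decompose $A\setminus E(V_0)$ into two self-similar halves, build nested clopen sets shrinking to a single maximal end on each side whose successive differences are homeomorphic to $A$ via \Cref{L:SelfSimilarSubsetsHomeo}, realize these as disjoint one-boundary subsurfaces, and string them into a properly embedded strip supporting a shift as in \Cref{D:shift}. Your explicit attention to properness of the embedded strip and to matching the genus data via the pair $(A_i, A_i\cap E^G)$ is a slightly more careful rendering of steps the paper treats briefly, but the argument is the same.
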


\begin{proof}

We will show that the desired homeomorphism $f$ is a shift map on $S$, as in \Cref{D:shift}. Set $V_0$ to be any subsurface as in the statement of the lemma (i.e. a subsurface with $E(V_0)$ containing some, but not all, maximal ends of $A$, with one boundary component, and with zero or infinite genus). Set $A_0=E(V_0)\subset A$. By applying \Cref{L:SelfSimilarSubsetsHomeo} to the partition $A_0 \sqcup (A\setminus A_0)$, we have that $A_0$ and $A\setminus A_0$ are homeomorphic to $A$. Applying \Cref{L:SelfSimilarSubsetsHomeo} again to $A\setminus A_0$, we may partition $A\setminus A_0$ into two clopen subsets, $B_0$ and $C_0$, each containing some maximal ends of $A$, and so that $B_0$ and $C_0$ are both homeomorphic to $A$. So, we have $A=A_0\sqcup B_0\sqcup C_0$, where each set in the union is homeomorphic to $A$.

Fix maximal ends $b\in \mathcal{M}(B_0)$ and $c\in \mathcal{M}(C_0)$. Next, choose partitions $B_0 = B_1^1 \sqcup B_1^2$ and $C_0 = C_1^1 \sqcup C_1^2$ into disjoint clopen sets, each containing points of $\mathcal{M}(A)$, and such that $b\in B_1^1$ and $c\in C_1^1$. Most importantly, \Cref{L:SelfSimilarSubsetsHomeo} applied to $B_0$ and $C_0$ tells us $B_1^1$, $B_1^2$, $C_1^1$, and $C_1^2$ are each homeomorphic to $A$, since $B_0$ and $C_0$ are homeomorphic to $A$. Let $\alpha_{-1}$ be a curve which cuts off a subsurface $V_{-1}$ with end space $B_1^2$, and let $\alpha_1$ be a curve which cuts off a subsurface $V_1$ with end space $C_1^2$. Note that we can take each of these curves to be disjoint from $\alpha_0$ and each other. We now choose partitions $B_1^1 = B_2^1 \sqcup B_2^2$ and $C_1^1 = C_2^1 \sqcup C_2^2$ into disjoint, clopen sets, each containing points of $\mathcal{M}(A)$, and such that $b\in B_2^1$ and $c\in C_2^1$. We let $\alpha_{-2}$ be a curve which cuts off a subsurface $V_{-2}$ with end space $B_2^2$, and let $\alpha_2$ be a curve which cuts off a subsurface $V_{2}$ with end space $C_2^2$.

\begin{figure}[h]
    \centering
    \includegraphics[width = 13cm]{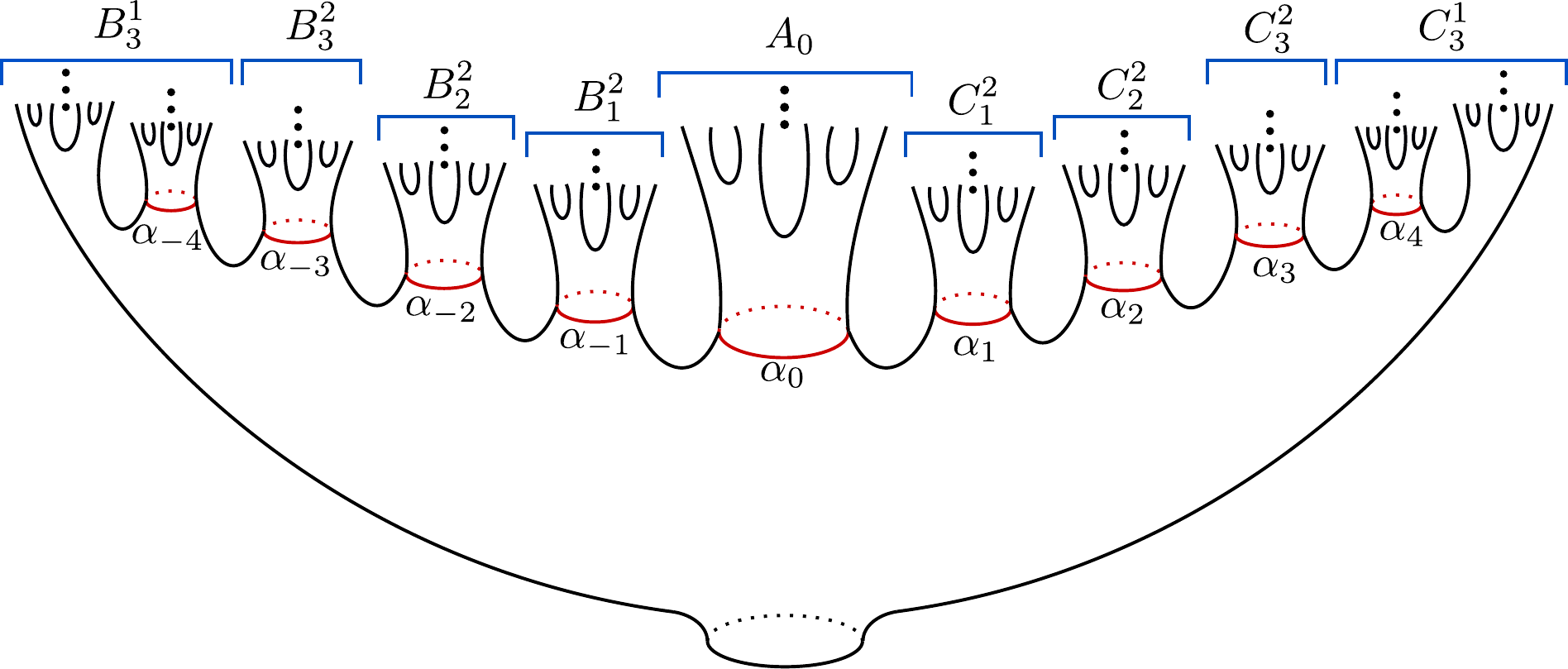}
    \caption{The countable collection of disjoint, homeomorphic subsurfaces of a surface with a self-similar end space and a Cantor set of maximal ends.}
    \label{fig:ShiftMapOnSurface}
\end{figure}

Continuing in this fashion, we choose partitions $B_i^1 = B_{i+1}^1 \sqcup B_{i+1}^2$ and $C_i^1 = C_{i+1}^1 \sqcup C_{i+1}^2$ into disjoint, clopen sets, each containing points of $\mathcal{M}(A)$, and such that $b\in B_{i+1}^1$ and $c\in C_{i+1}^1$. Note that by applying \Cref{L:SelfSimilarSubsetsHomeo} at each stage in this process, we have that $B_i^1$, $B_i^2$, $C_i^1$, and $C_i^2$ are each homeomorphic to $A$ for all $i$. We let $\alpha_{-(i+1)}$ denote a curve which cuts off a subsurface $V_{-(i+1)}$ with end space $B_{i+1}^2$, and let $\alpha_{i+1}$ denote a curve which cuts off a subsurface $V_{i+1}$ with end space $C_{i+1}^2$. See \Cref{fig:ShiftMapOnSurface} for an illustration of this process. By construction, the end space of each $V_i$ is homeomorphic to $A$, the genus of $V_i$ for all $i$ is either zero or infinite, and each $V_i$ has exactly one boundary component, $\alpha_i$, which is disjoint from the boundary of $V_j$ for all $j\neq i$. In particular, the surfaces $V_i$ are all homeomorphic by the classification of surfaces. 

We now construct the desired homeomorphism $f$, which will be a shift map on $S$. The surfaces $\{V_i\}$ correspond to a countable collection of disjoint surfaces with one boundary component of the same topological type, which we call $V$. Let $S'$ be the surface obtained as in \Cref{D:shift} by taking the strip $\mathbb{R}\times [-1,1]$, removing a closed disk of radius $\frac{1}{2}$ with center $(n, 0)$ for each $n \in \mathbb{Z}$, and attaching a copy of $V$ to the boundary of each such disk. Note that there is a proper embedding of $S'$ into $S$ where the boundary of the disk with center $(n, 0)$ is mapped homeomorphically onto $\alpha_n$ for each $n \in \mathbb{Z}$. Lastly, we need to show that the sets $B_i^1$ and $C_i^1$ can be chosen so that $\bigcap_i B_i^1 = \{b\}$ and $\bigcap_i C_i^1 = \{c\}$, respectively, so that the ends $-\infty$ and $+\infty$ of $\mathbb{R} \times [-1,1]$ correspond to the distinct ends $b$ and $c$, respectively. 
We achieve this by putting a metric on the set $A$ so that the sets $B_i^1$ and $C_i^1$ shrink in diameter to zero as $i\to \infty$. Since $B_{i+1}^2$ is chosen to be contained in $B_i^1$ and $C_{i+1}^2$ is chosen to be contained in $C_i^1$ for all $i$, we have that $\bigcap_i B_i^1 = \{b\}$ and $\bigcap_i C_i^1 = \{c\}$, as desired.

Thus, the desired map is the homeomorphism $f$ that acts as a shift on the embedding of $S'$ into $S$ described above and extends via the identity on the rest of $S$. By construction, $f(V_i) = V_{i+1}$ for all $i \in \mathbb{Z}$.
\end{proof}

Recall that Mann--Rafi show that when $S$ is an infinite-type surface such that $\Map(S)$ is locally CB, there exists a finite-type subsurface $K\subset S$ such that the complementary components of $K$ partition $E(S)$ into finitely many clopen sets \[E(S) = A_1 \sqcup \cdots \sqcup A_n \sqcup P_1 \sqcup \cdots \sqcup P_m, \] where each $A_i$ is \emph{self-similar} and each $P_k$ is homeomorphic to a clopen subset of some $A_i$. Using this partition, we prove the following result.

\begin{figure}
    \centering
    \includegraphics[width = 7cm]{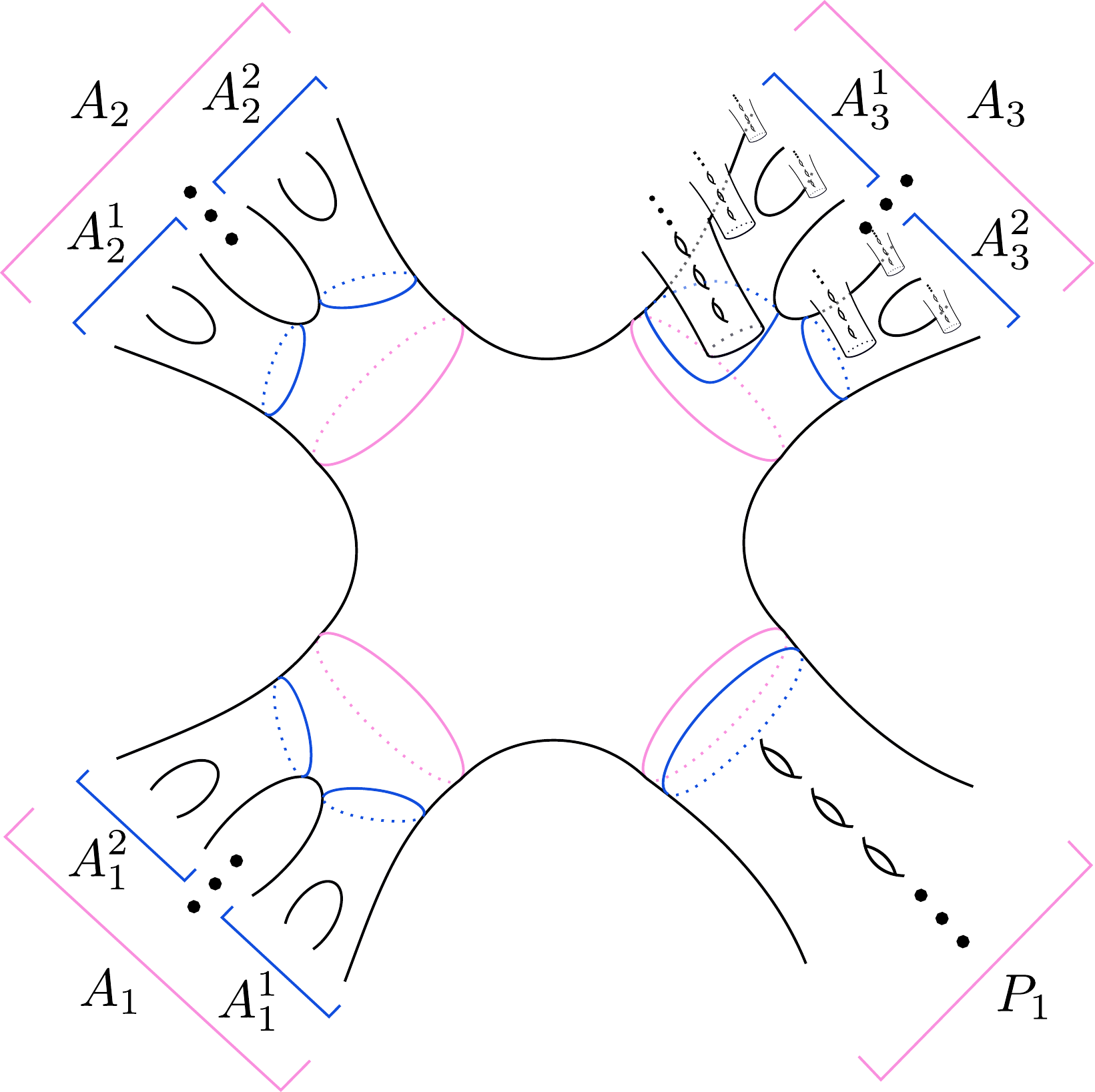}
    \caption{The subsurface $K$ which partitions the end space as in \Cref{MannRafiTh5.7} is bounded by the light pink curves. The extended subsurface $K_1$ which partitions the end space as in the proof of \Cref{T:BoundedCommutatorLength} is bounded by the dark blue curves.}
    \label{fig:EndPartition}
\end{figure}

\medskip

\noindent{\bf \Cref{T:BoundedCommutatorLength}}
    {\em \tBoundedCommLength }

\begin{proof}
Let $S$ and $K$ be as above. 
Note that by \Cref{R:isolated}, if $S$ has finitely many isolated planar ends, then some of the components $A_i$ may consist of a single isolated planar end. 
So, we suppose that the numbering has been chosen so that $A_{n'+1},\ldots,A_n$ each consist of a single isolated planar end (in the case where there are finitely many such ends) and $A_1,\ldots,A_{n'}$ do not. We note that unless $S$ has finitely many isolated planar ends, we have that $n'=n$; see Remark \ref{R:isolated}. However, we do not use this fact in the following proof. By our assumption that each equivalence class of maximal ends is infinite, other than the equivalence class consisting of the isolated planar ends, each set $A_1,\ldots, A_{n'}$ is self-similar with a Cantor set of maximal ends.

We may now enlarge $K$ to a finite-type subsurface $K_1$ so that for each self-similar set $A_i$ with $1\leq i \leq n'$, $K_1$ partitions $A_i$ into two clopen sets, $A_i = A_i^1 \sqcup A_i^2$, where each $A_i^j$ contains points of $\mathcal M(A_i)$ for $j = 1, 2$; see \Cref{fig:EndPartition}. By \Cref{L:SelfSimilarSubsetsHomeo}, both $A_i^1$ and $A_i^2$ are homeomorphic to $A_i$, and thus are each self-similar. The complement of $K_1$ in $S$ then partitions $E(S)$ into \[E(S) =  A_1^1\sqcup A_1^2\sqcup \cdots A_{n'}^1 \sqcup A_{n'}^2 \sqcup A_{n'+1} \sqcup \cdots \sqcup A_n \sqcup P_1 \sqcup \cdots \sqcup P_m \] 
where each $A_i^j$ contains some maximal ends of $A_i$ for $1\leq i\leq n'$. Note that we may choose $K_1$ so that $S\setminus K_1$ consists of subsurfaces $V_i^j$ for $i=1,\ldots,n'$ and $j=1,2$, $V_i$ for $i=n'+1,\ldots n$, and $W_k$ for $k=1,\ldots, m$, each with one boundary component and end space $A_i^j$ (with $1\leq i\leq n'$), $A_i$ (with $n'+1\leq i\leq n$), or $P_k$, respectively. As above, we assume the boundary components of $K_1$ have been chosen appropriately so that each of these subsurfaces has genus zero or infinity. 

Recall that $\mathcal{V}_{K_1} = \{g\in \Map(S) \mid g \text{ restricts to the identity on } K_1\}$. We will show that any element of $\Map(S)$ contained in $\mathcal{V}_{K_1}$ can be written as a product of commutators. Since each of the subsurfaces $V_{n'+1},\ldots,V_n$ is simply a once-punctured disk, an element $h\in \mathcal{V}_{K_1}$ satisfies \[h=\left(\prod_{\substack{1\leq i\leq n' \\ 1\leq j\leq 2}}g_i^j\right)\left(\prod_{1\leq k\leq m} p_k\right)\] 
where the $g_i^j$ and $p_k$ are pairwise-commuting mapping classes supported on $V_i^j$ and $W_k$, respectively. Hence, it suffices to show that each $g_i^j$ and each $p_k$ is a commutator in $\Map(S)$.

\medskip

\noindent \textbf{Claim 1:} Each $g_i^j$ is a commutator.

\begin{proof}[Proof of Claim 1]
We will prove this for $g = g_1^1$ and note that the proof for $g_i^j$ is the same for $1\leq i \leq n'$ and $1\leq j \leq 2$. Note that $A_1$ is a clopen subset of $E(S)$ which is self-similar and contains a Cantor set of maximal ends. Setting $V_0=V_1^1$, we note that the end space $A_1$ and the subsurface $V_0$ satisfy the hypotheses of \Cref{ShiftMapTake2}. So, by \Cref{ShiftMapTake2} we may extend $V_0$ to a countable collection of disjoint subsurfaces $\{V_i\}_{i\in \Z}$ of $S$ equipped with a homeomorphism $u$ of $S$ satisfying $u(V_i)=V_{i+1}$ for each $i\in \mathbb{Z}$. Define a homeomorphism $v$ by:
\begin{equation*}
v(x)=\begin{cases}
         \, u^i g u^{-i}(x) \quad &\text{if } \, x \in V_i \text{ for some } i\geq 0; \text{ and}\\
     \, x \quad &\text{otherwise. }
     \end{cases}
\end{equation*}

Note that $v$ is a well-defined homeomorphism since the $V_i$ surfaces are disjoint. We claim that $g = [v, u] = v u v^{-1} u^{-1}$, i.e. $[v,u]$ acts as $g$ on $V_0$ and as the identity everywhere else. Indeed, note that $\supp(v) \subseteq \bigcup_{i=0}^\infty V_i$. So, if $x\not\in \bigcup_{i = 0}^\infty V_i$, then $vuv^{-1}u^{-1}(x) = x$. If $x\in V_0$, then $u^{-1}(x) \in V_{-1}$. As $v$ acts as the identity on $V_{-1}$, we have that $vuv^{-1}u^{-1}(x) = v(x)= g(x)$, since $v(x) = g(x)$ on $V_0$ by definition. So, suppose $x\in V_i$ for some $i> 0$. Then, \begin{align*}
    vuv^{-1}u^{-1}(x) &= v u (u^{i-1} g^{-1} u^{-i + 1}) u^{-1}(x) \\
    &= (u^i g u^{-i}) u (u^{i-1} g^{-1} u^{-i + 1}) u^{-1}(x) \\
    &= x.
\end{align*}
Thus, $g = [v, u]$, as claimed.
\end{proof}

\medskip

\noindent \textbf{Claim 2:} In fact, fixing $j$, $\prod_{1\leq i\leq n'}g_i^j$ is a single commutator. 

\begin{proof}[Proof of Claim 2]
Denote the elements constructed in Claim 1 as $u_i^j, v_i^j$ so that $[v_i^j, u_i^j] = g_i^j$. Note that $u_i^j$ and $v_i^j$ are supported on the component of $S\setminus K$ with end space $A_i$ so that, as we vary $i$ with $j$ fixed, $u_i^j$ and $v_i^j$ commute with both $u_{i'}^j$ and $v_{i'}^j$ whenever $i \neq i'$. Thus, $$\prod_{1\leq i\leq n'}g_i^j =\left[\prod_{1\leq i\leq n'}v_i^j, \prod_{1\leq i\leq n'}u_i^j\right]$$ is a single commutator.

\end{proof}

\medskip

\noindent \textbf{Claim 3:} Each $p_k$ is a commutator.

\begin{proof}[Proof of Claim 3]
We show this explicitly for the case $p=p_1$, the other cases $1\leq k\leq m$ being identical. To do this, we will show that $p$ is conjugate to a mapping class supported in some $V_i^j$.

Note that $P_1$ is homeomorphic to a clopen subset of some $A_i$ with $1\leq i\leq n'$. Without loss of generality, we assume that $P_1$ is homeomorphic to a clopen subset of $A_1$. Since $A_1^1\cong A_1$, we have that $P_1$ is in turn homeomorphic to a clopen subset $Q_1$ of $A_1^1$. We claim that $E(S) \setminus P_1$ and $E(S)\setminus Q_1$ are also homeomorphic. To see this, first note that 
\[E(S)\setminus P_1 =A_1^1 \sqcup A_1^2 \sqcup \left(\bigsqcup_{2\leq i\leq n} A_i\right) \sqcup \left( \bigsqcup_{k\geq 2} P_k\right)\]
and 
\[E(S)\setminus Q_1 = (A_1^1 \setminus Q_1) \sqcup A_1^2 \sqcup P_1  \sqcup \left(\bigsqcup_{2\leq i \leq n} A_i\right) \sqcup \left(\bigsqcup_{k\geq 2} P_k \right). \]
Since \[(A_1^1 \setminus Q_1)\sqcup A_1^2 \sqcup P_1 \cong (A_1^1 \setminus Q_1)\sqcup A_1^2 \sqcup Q_1 = A_1^1\sqcup A_1^2,\] we see that $E(S)\setminus P_1$ and $E(S)\setminus Q_1$ are homeomorphic. Recall that $W_1$ is a subsurface of $S$ with one boundary component, of zero or infinite genus, and with end space $P_1$. Given that $Q_1$ is clopen in $A_1^1$, we can choose a subsurface $U_1$ of $V_1^1$ with end space $Q_1$, one boundary component, and genus zero or infinity. Then by the classification of surfaces, $W_1$ and $U_1$ are homeomorphic, and given the argument above, $S\setminus W_1$ and $S \setminus U_1$ are homeomorphic as well. Thus, there is a homeomorphism $w$ of $S$ mapping $W_1$ to $U_1$ so that $wpw^{-1}$ is supported on $U_1\subset V_1^1$. By the proof of Claim 1, any mapping class supported on $V_1^1$ is a commutator. Hence, $wpw^{-1}$, and therefore $p$ itself, is a commutator.
\end{proof}

Now, let $\mathcal{O} = \mathcal V_{K_1} = \{g\in \Map(S) \mid g|_{K_1} = \operatorname{id}\}$, and note that $\mathcal{O}$ is an open subgroup of $\Map(S)$. If $g\in \mathcal{O}$, then we have as above that $g$ is a product of mapping classes $g_i^j$ with $1\leq i \leq n'$ and $1\leq j \leq 2$ and $p_k$ with $1\leq k \leq m$. Claims 1 and 3 show that each $g_i^j$ and each $p_k$ is a commutator, and we have that $\cl(g)\leq 2n'+m\leq 2n+m$. To obtain the better bound $\cl(g)\leq 2$, note that the proof of Lemma A.1 from the Appendix shows that we can take $m=0$ by absorbing each $P_k$ into the appropriate $A_i$, and Claim 2 shows that the product of $g_i^j$ with $1\leq i \leq n'$ and $1\leq j \leq 2$ is, in fact, a product of 2 commutators.  
\end{proof}

\section{Proof of \Cref{T:SCLContinuous}}\label{S:ProofOf1.1}

In this section, let $S$ be an infinite-type surface such that $\Map(S)$ is locally CB and such that every equivalence class of maximal ends of $S$ is infinite, other than a finite set of isolated planar ends. Recall that if $K\subset S$ is a finite-type subsurface, then $\mathcal V_K$ is the open neighborhood of the identity in $\Map(S)$ which consists of those mapping classes which restrict to the identity on $K$. We first prove a result which holds for general topological groups. The proof of this result is based heavily on the proof of \cite[Theorem 1.5]{bhw}.

\begin{proposition}
\label{prop:sclcontinuous}
Let $G$ be a topological group. Suppose that there is a neighborhood $\mathcal O$ of the identity in $G$ which is contained in the commutator subgroup $[G,G]$ and that $\scl(g) = 0$ for all $g\in \mathcal O$. Then:
\begin{itemize}
    \item[(a)] any homogeneous quasimorphism $\phi:G\to \R$ is continuous; and
    \item[(b)] the stable commutator length function is continuous on $[G,G]$.
\end{itemize}
\end{proposition}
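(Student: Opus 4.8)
The plan is to deduce both parts from a single observation together with one ``raise-to-a-power and divide'' device that converts the uniform additive error in the quasimorphism (resp.\ scl) inequality into genuine continuity. The observation I would record first is that \emph{every} homogeneous quasimorphism $\phi$ vanishes on $\mathcal O$: if $D(\phi)>0$, then Bavard duality (\Cref{BavardDuality}) gives $|\phi(g)| \le 2D(\phi)\,\scl(g) = 0$ for $g \in \mathcal O$; and if $\phi \in H^1(G;\R)$ is a homomorphism, it vanishes on $[G,G] \supseteq \mathcal O$ since $\R$ is abelian. Either way $\phi|_{\mathcal O} \equiv 0$, and by hypothesis $\scl|_{\mathcal O}\equiv 0$ as well.

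For part (a), I would fix $g_0 \in G$, take an arbitrary $g \in G$, and for each $n \ge 1$ set $P_n = g_0^{-n} g^n$, so that $g^n = g_0^n P_n$. The quasimorphism inequality applied to the factors $g_0^n$ and $P_n$, together with homogeneity, yields
\[
|n\phi(g) - n\phi(g_0) - \phi(P_n)| = |\phi(g^n) - \phi(g_0^n) - \phi(P_n)| \le D(\phi).
\]
The crucial point is that, for \emph{fixed} $n$, the map $g \mapsto P_n = g_0^{-n}g^n$ is continuous and sends $g_0$ to the identity; since $\mathcal O$ is open, there is a neighborhood $N_n$ of $g_0$ on which $P_n \in \mathcal O$, whence $\phi(P_n) = 0$. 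For $g \in N_n$ this gives $|\phi(g) - \phi(g_0)| \le D(\phi)/n$, and choosing $n > D(\phi)/\varepsilon$ exhibits a neighborhood of $g_0$ on which $|\phi - \phi(g_0)| < \varepsilon$. This uses only homogeneity, continuity of the group operations, and the vanishing of $\phi$ on $\mathcal O$; in particular no appeal to conjugation-invariance is needed.

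For part (b), I would run the identical device with $\scl$ in place of $\phi$. Bavard duality first gives the quasi-subadditivity $\scl(ab) \le \scl(a) + \scl(b) + \tfrac12$ for $a,b \in [G,G]$ (bound $|\phi(ab)| \le |\phi(a)| + |\phi(b)| + 1$ for $\phi \in \widetilde{QH}_1(G)$ and take the supremum). Now fix $g \in [G,G]$ and let $g'$ range over $[G,G]$. Writing $g'^n = g^n P_n$ with $P_n = g^{-n} g'^n$, for fixed $n$ we have $P_n \in \mathcal O$ (so $\scl(P_n)=0$) once $g'$ is close enough to $g$; quasi-subadditivity and homogeneity of $\scl$ then give $n\,\scl(g') = \scl(g'^n) \le \scl(g^n) + \tfrac12 = n\,\scl(g) + \tfrac12$, i.e.\ $\scl(g') \le \scl(g) + \tfrac1{2n}$ near $g$, which is upper semicontinuity. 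Symmetrically, writing $g^n = g'^n Q_n$ with $Q_n = g'^{-n} g^n \in \mathcal O$ for $g'$ near $g$ gives $\scl(g) \le \scl(g') + \tfrac1{2n}$, i.e.\ lower semicontinuity. Letting $n \to \infty$ in both bounds yields continuity of $\scl$ on $[G,G]$.

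I expect the only real subtlety to be the one already highlighted: the quasimorphism and scl inequalities naively give only the uniform gaps $D(\phi)$ and $\tfrac12$, which do not shrink as $g \to g_0$. The whole point is that these gaps become harmless after passing to $n$-th powers, because for \emph{each fixed} $n$ the correction term $g_0^{-n}g^n$ is forced into $\mathcal O$ (where $\phi$ and $\scl$ vanish) by continuity of the group operations, so the gap is divided by $n$. Care is needed to keep the neighborhood's dependence on $n$ straight and to check at each step that the relevant elements lie in $[G,G]$ so that $\scl$ is defined, but these are routine.
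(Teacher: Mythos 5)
Your proposal is correct and follows essentially the same route as the paper: the key device in both is that for fixed $n$ the correction term $g_0^{-n}g^n$ can be forced into $\mathcal O$ by continuity of the group operations, so homogeneity divides the defect by $n$. The only (immaterial) difference is in part (b), where you first extract quasi-subadditivity $\scl(ab)\leq\scl(a)+\scl(b)+\tfrac12$ from Bavard duality and run the power trick on $\scl$ directly, while the paper applies the part-(a) estimate to each defect-one quasimorphism and then takes the supremum.
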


\begin{proof}
We first prove (a). Consider a homogeneous quasimorphism $\phi:G\to \R$. We note that by Bavard Duality (\Cref{BavardDuality}), since $\scl(g) = 0$ for all $g\in \mathcal O$, we must have that $\phi(g) = 0$ for all $g\in \mathcal O$.

We will now prove that any homogeneous quasimorphism $\phi:G\to \R$ is continuous on $[G,G]$. To that end, choose any integer $k \geq 1$ and let $g\in [G,G]$. Fix a neighborhood $\mathcal{U}_k$ of $g$ small enough so that if $h\in \mathcal U_k$, then $g^kh^{-k} \in \mathcal O$ (in particular we may choose $\mathcal U_k$ to be the pre-image of $\mathcal O$ under the continuous map $G\to G$ defined by $h\mapsto g^kh^{-k}$). Let $\phi:G\to \R$ be a homogeneous quasimorphism. If $h \in \mathcal U_k$ as above, then we have \[k|\phi(g)-\phi(h)|=|\phi(g^k)+\phi(h^{-k})-\phi(g^kh^{-k})|\leq D(\phi).\] Here, the equality holds by the definition of a homogeneous quasimorphism and the fact that $\phi(g^kh^{-k})=0$ (since $g^kh^{-k} \in \mathcal O$). Hence, for all $k>0$, there exists a neighborhood $\mathcal U_k$ of $g$ such that $|\phi(g)-\phi(h)|\leq \frac{D(\phi)}{k}$ for all $h \in \mathcal U_k$. This implies that $\phi$ is continuous.

Finally, we prove continuity of $\scl$ as a function on $[G,G]$. Choose $g\in [G,G]$, let $k\geq 1$ be any integer, and define the neighborhood $\mathcal U_k$ of $g$, depending on $k$, as before. Choose $\phi$ to be any homogeneous quasimorphism on $G$ with $D(\phi)=1$. By the above paragraph, if $h\in \mathcal U_k$ then we have \[\frac{|\phi(h)|-\frac{1}{k}}{2}\leq \frac{|\phi(g)|}{2} \leq \frac{|\phi(h)|+\frac{1}{k}}{2}.\] Taking the supremum over all homogeneous quasimorphisms $\phi$ of defect 1 yields \[\scl(h)-\frac{1}{2k}\leq \scl(g)\leq \scl(h)+\frac{1}{2k}\] for all $h\in \mathcal U_k$. This proves that $\scl$ is continuous.
\end{proof}

As a consequence of \Cref{T:BoundedCommutatorLength}, we now prove our main result about stable commutator length and the commutator subgroup. We note that by \Cref{prop:sclcontinuous}, the following result holds whenever there exists an open subgroup $\mathcal{O}$ in $\Map(S)$ and a constant $B > 0$ such that if $f\in \mathcal{O}$, then $f$ is a product of at most $B$ commutators in $\Map(S)$. The proof of \Cref{T:BoundedCommutatorLength} shows this condition holds for surfaces $S$ satisfying the conditions above using the neighborhood $\mathcal{V}_{K_1}$, but it may also hold for a larger class of surfaces.

\medskip

\smallskip

\noindent{\bf \Cref{T:SCLContinuous}}
    {\em \tSCLisContinuous } 
    
\begin{proof}
We let $K$ be the subsurface of $S$ which partitions $E(S)$ as in the statement of \Cref{MannRafiTh5.7}. 
As in the proof of \Cref{T:BoundedCommutatorLength}, we let $K_1\subset S$ be an enlargement of $K$ such that the complementary components to $K_1$ in $S$ partition $E(S)$ into finitely many clopen sets \[ E(S) = A_1^1 \sqcup A_1^2 \sqcup \cdots \sqcup A_{n'}^1 \sqcup A_{n'}^2 \sqcup A_{n'+1} \sqcup \cdots \sqcup A_n \sqcup P_1 \sqcup \cdots \sqcup P_m, \] where each of the $A_i^j$ contain some maximal ends of $S$. 
Note that by the proof of \Cref{T:BoundedCommutatorLength}, the commutator subgroup $[\Map(S), \Map(S)]$ contains the open subset $\mathcal{V}_{K_1}$. Therefore, by \Cref{L:OpenClosedSubgroup}, $[\Map(S), \Map(S)]$ is both an open and closed subgroup of $\Map(S)$. Recall that $\Map(S)$ is a Polish group, i.e., it is separable and completely metrizable. As any closed subspace of a Polish space is Polish, we have that $[\Map(S), \Map(S)]$ is a Polish group as well. 

Using the open subgroup $\mathcal V_{K_1}$, the rest of the theorem now follows from \Cref{prop:sclcontinuous} and \Cref{T:BoundedCommutatorLength}. 
\end{proof}    

Given a surface $S$ as above, we also show that the commutator length, $\cl$, on the commutator subgroup varies continuously in a certain coarse sense.

\begin{corollary}
There exists $B >0$ such that if $f\in [\Map(S),\Map(S)]$ then there is a neighborhood $\mathcal U$ of $f$ in $\Map(S)$ with the property that if $g\in \mathcal U$, then $g\in [\Map(S),\Map(S)]$ and \[|\cl(g)-\cl(f)|\leq B.\]
\end{corollary}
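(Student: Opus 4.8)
The plan is to take $B = 2$ and, for each $f \in [\Map(S),\Map(S)]$, use the left coset $\mathcal{U} = f\cdot \mathcal{V}_{K_1}$ as the required neighborhood, where $\mathcal{V}_{K_1}$ is the open subgroup constructed in the proof of \Cref{T:BoundedCommutatorLength} on which $\cl$ is bounded above by $2$. The whole argument hinges on two elementary facts about commutator length combined with the uniform bound from \Cref{T:BoundedCommutatorLength}, so there is no serious obstacle; the point is simply to recognize that a uniform bound on an open subgroup "coarsifies" across the entire commutator subgroup via cosets.

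First I would observe that $\mathcal{U} = f\cdot\mathcal{V}_{K_1}$ is genuinely an open neighborhood of $f$: left-translation by $f$ is a homeomorphism of the topological group $\Map(S)$, and $\mathcal{V}_{K_1}$ is an open neighborhood of the identity, so its image $f\cdot\mathcal{V}_{K_1}$ is open and contains $f$. Next, for any $g \in \mathcal{U}$, write $g = fh$ with $h \in \mathcal{V}_{K_1}$. Since the proof of \Cref{T:BoundedCommutatorLength} shows $\mathcal{V}_{K_1} \subseteq [\Map(S),\Map(S)]$, the element $g$ is a product of two elements of the commutator subgroup and hence lies in $[\Map(S),\Map(S)]$, as required.

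Finally, for the commutator length bound I would invoke the subadditivity of $\cl$ under multiplication: if $f$ is a product of $\cl(f)$ commutators and $h$ is a product of $\cl(h)$ commutators, then $fh$ is a product of $\cl(f)+\cl(h)$ commutators, so $\cl(fh) \leq \cl(f)+\cl(h)$. Applying this to $g = fh$ together with the bound $\cl(h)\leq 2$ from \Cref{T:BoundedCommutatorLength} gives $\cl(g) \leq \cl(f) + 2$. For the reverse inequality, note that $\mathcal{V}_{K_1}$ is a subgroup, so $h^{-1} \in \mathcal{V}_{K_1}$, and $\cl(h^{-1}) = \cl(h) \leq 2$ (since the inverse of a product of $k$ commutators is again a product of $k$ commutators, using $[x,y]^{-1} = [y,x]$). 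Writing $f = g h^{-1}$ and applying subadditivity again yields $\cl(f) \leq \cl(g) + 2$. Combining the two inequalities gives $|\cl(g) - \cl(f)| \leq 2$, so $B = 2$ works uniformly over all $f \in [\Map(S),\Map(S)]$.
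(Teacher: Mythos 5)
Your proof is correct and follows essentially the same route as the paper: take $\mathcal U = f\mathcal V_{K_1}$, write $g = fh$ with $h \in \mathcal V_{K_1}$, and apply subadditivity of $\cl$ in both directions. The only difference is cosmetic: you use the bound $\cl(h)\leq 2$ from the statement of \Cref{T:BoundedCommutatorLength} to get $B=2$, while the paper's proof uses the intermediate bound $2n+m$ from its own argument; both are valid.
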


\begin{proof}
Let $f\in [\Map(S),\Map(S)]$. Choose a neighborhood $\mathcal U$ of $f$ in $\Map(S)$ small enough so that if $g\in \mathcal U$, then $f^{-1}g \in \mathcal V_{K_1}$ (for instance choose $\mathcal U$ to be the coset $f \mathcal{V}_{K_1}$).

Set $f^{-1}g = h$, where $h \in \mathcal{V}_{K_1}$, and note that $h$ is a product of at most $2n +m$ commutators by \Cref{T:BoundedCommutatorLength}. Then, $g = fh$, and so $\cl(g)\leq \cl(f)+2n+m$. Similarly, $f = g h^{-1}$, and so $\cl(f)\leq \cl(g)+2n+m$. Letting $B = 2n +m$ gives the result.

\end{proof}

With some additional assumptions on the surface $S$, we can prove that the abelianization of $\Map(S)$ is finitely generated. We will assume that our surface has \emph{tame} end space and that $\Map(S)$ is generated by a coarsely bounded set. 
In particular, we will be restricting to surfaces $S$ which satisfy the hypotheses of \cite[Theorem 1.6]{ManRaf20}. For ease of exposition, we omit the definition of tame end space and refer the reader to \cite{ManRaf20} for details.

\medskip

\smallskip

\noindent{\bf \Cref{T:FiniteAbelianization}}
    {\em \tFiniteAbelianization }

\begin{proof}
Recall that $\mathcal{V}_K = \{g\in \Map(S) \mid g|_K = \operatorname{id}\}$. 
In \cite[Theorem 1.6]{ManRaf20}, Mann--Rafi show that 
the group $\Map(S)$ is generated by $\mathcal{V}_K$ and a finite set $F$ (see discussion before the proof of Theorem 1.6 in \cite{ManRaf20}), where $K$ is any subsurface satisfying the conditions in \Cref{MannRafiTh5.7}. Note that the subsurface $K_1$, which is an extension of the subsurface $K$, also satisfies the hypotheses of \Cref{MannRafiTh5.7}. Indeed, condition (1) follows from our assumption that $K_1$ partitions each $A\in \mathcal{A}$ containing a Cantor set of maximal ends into two clopen subsets which are again self-similar, condition (2) is immediate from the definition of self-similarity, and condition (3) follows since the complementary regions of $K$ contain the complementary regions of $K_1$. 
Hence, $\Map(S)$ is generated by $\mathcal{V}_{K_1}$ and another finite set $F_1$.
As $\mathcal{V}_{K_1}\subset [\Map(S),\Map(S)]$ by \Cref{T:BoundedCommutatorLength}, the abelianization of $\Map(S)$ is generated by the image of $F_1$, and is thus finitely generated. 

Note that the quotient of a topological group by an open subgroup is discrete; see, for instance, \cite[2.6.6 (iii)]{HigginsTopGroups} (or simply note that the preimage of a point in the quotient is a coset of the given open subgroup). By \Cref{T:SCLContinuous}, $[\Map(S), \Map(S)]$ is an open subgroup of $\Map(S)$, and hence the abelianization of $\Map(S)$ is discrete. 

\end{proof}

\section*{Appendix}\label{Appendix}
In this appendix, we give a concrete topological characterization of the surfaces that satisfy the hypotheses of \Cref{T:SCLContinuous} and \Cref{T:BoundedCommutatorLength}. The following lemma was suggested to us by J. Malestein and J. Tao, who also outlined the proof that we give below. As per Malestein--Tao \cite{selfsim}, we call a surface $S$ \emph{uniformly self-similar} if $S$ has zero or infinite genus and $E(S)$ is self-similar and contains a Cantor set of maximal ends. In this case, we will say that $E(S)$ is a uniformly self-similar set. 

\medskip

\noindent{\bf Lemma A.1}
    {\em \tTopologicalCharacterization }

\begin{proof}
Suppose first that $S$ is the connected sum of finitely many uniformly self-similar surfaces, 
with a finite-type surface $T$. If $T$ has $k$ punctures, we can express $T$ as a connected sum of a closed surface $T'$ with $k$ once-punctured spheres. Label the uniformly self-similar surfaces $S_1, \ldots, S_n$ and let $A_i$ denote the endspace of the surface $S_i$. Letting $K = T'$, the complementary components of $K$ partition the endspace of $S$ as $E(S) = A_1 \sqcup \cdots \sqcup A_n \sqcup A_{n+1} \sqcup \cdots \sqcup A_{n+k}$, where the sets $A_i$ for $i = n+1, \ldots, n+k$ are single isolated punctures.
We note that since each $S_i$ is uniformly self-similar, each endspace $A_i$ contains a Cantor set of maximal ends for $i = 1, \ldots, n$. 
Thus, we may enlarge $K$ to a finite-type subsurface $K_1$ so that for each uniformly self-similar set $A_i$, $K_1$ partitions $A_i$ into two clopen sets, $A_i = A_i^1 \sqcup A_i^2$, where each $A_i^j$ contains points of $\mathcal M(A_i)$ for $j = 1, 2$. We further assume that $K_1$ is such that each boundary curve is separating. We let $V_i^j$ denote the complementary component of $S - K_1$ with endspace $A_i^j$ for $1\leq i\leq n$ and $1\leq j \leq 2$. 

We note that by assumption and construction, $S$ may possibly have finitely many isolated planar ends (if $T$ has finitely many isolated punctures), and otherwise each equivalence class of maximal ends is infinite. So, it remains to prove that $\Map(S)$ is locally CB. To do so, we will use the characterization of locally CB mapping class groups of Mann--Rafi \cite{ManRaf20}. The subsurface $K_1$ partitions the endspace of $S$ into finitely many clopen sets $E(S) = A_1^1 \sqcup A_1^2 \sqcup \cdots \sqcup A_n^1 \sqcup A_n^2 \sqcup A_{n+1} \sqcup \cdots \sqcup A_{n+k}$ where each $A_i^j$ and $A_r$ (for $r= n+1, \ldots, n+k$) is self-similar, and so this partition satisfies property (1) of \Cref{MannRafiTh5.7}. As there are no $P_i$'s in this construction, property (2) is vacuously satisfied. So, it remains to prove condition (3).

To that end, let $x$ be any maximal end. If $x$ is the unique end in $A_r$ for $r = n+1, \ldots, n+k$, the property is immediately satisfied letting $f_V$ be the identity homeomorphism. Now let $x \in \mathcal{M}(A_i^j)$. Up to relabeling, we may assume, without loss of generality, that $x\in \mathcal{M}(A_1^1)$. Let $V$ be any neighborhood of $x$. 
By shrinking $V$ slightly, we may assume that $V \subset V_1^1$ and has one boundary component. As in the proof of \cite[Theorem 5.7]{ManRaf20}, we have that the pair $(V, S - V)$ is homeomorphic to the pair $(V_1^1, S - V_1^1)$. Indeed, by \Cref{L:SelfSimilarSubsetsHomeo}, we have that $A_1^2$ is homeomorphic to $A_1^2 \sqcup (A_1^1 - E(V))$. Hence $E(S-V)=E(S-V_1^1)\cup E(V_1^1-V)\cong E(S-V_1^1)$. Note also that $E(V) \cong E(V_1^1)$ by \Cref{L:SelfSimilarSubsetsHomeo}. Note that if $S_1$ has infinite genus, then so do $V$ and $V_1^1$, as do $S - V$ and $S - V_1^1$. If $S_1$ has zero genus, then so do $V$ and $V_1^1$; and $S - V$ and $S - V_1^1$ have the same genus as $S$. Thus, by the classification of surfaces, there is a homeomorphism $f_V$ of $S$ taking $V$ to $V_1^1$, as desired.

For the other direction, suppose that $S$ is locally CB and that every equivalence class of maximal ends of $S$ is infinite, except for possibly the isolated punctures. We choose $K$ to partition $E(S)$ as in \Cref{MannRafiTh5.7}. In other words, $K$ partitions $E$ into $A_1\sqcup \cdots \sqcup A_n \sqcup A_{n+1} \sqcup \cdots \sqcup A_{n+k}$, collectively containing the maximal ends of $E$, and finitely many sets $P_1\sqcup \ldots \sqcup P_m$ homeomorphic to clopen subsets of the sets $A_i$. 
Here, $A_1, \ldots, A_n$ are uniformly self-similar and $A_{n+1}, \ldots, A_{n+k}$ are isolated punctures. As usual, we may suppose that the complementary subsurfaces to $K$ each have one boundary component and genus zero or infinity. Denote by $V_i$ the subsurface with endspace $A_i$ and $W_j$ the subsurface with endspace $P_j$. Note that by capping off the boundary component of $V_i$ with a disk, we obtain a self-similar surface.
Without loss of generality, we assume that $m>0$ and $P_1$ is homeomorphic to a clopen subset of $A_1$. We may replace $K$ by a subsurface $K'$ obtained as follows: join $\partial V_1$ to $\partial W_1$ via a simple arc $\gamma$ contained in $K$, and define $K'$ to be the complement in $K$ of a small regular neighborhood of $V_1\cup W_1\cup \gamma$. Denote by $V_1'$ this regular neighborhood of $V_1\cup W_1\cup \gamma$. Then the complementary subsurfaces to $K'$ are $V_1',V_2,\ldots,V_n$ and $W_2,W_3,\ldots,W_m$. 
We claim that $V_1'$ is a uniformly self-similar surface minus an open disk. 
To see this, note first that $V_1'$ has genus zero or infinity and one boundary component. Furthermore, $V_1'$ has endspace $A_1\sqcup P_1$ which is homeomorphic to $A_1$ by \Cref{C:PinA}. This proves the claim. 
We may remove the remaining clopen sets $P_j$ in the partition one by one by absorbing them into the relevant $A_i$ in this way. The result is a subsurface $K''\subset K$ partitioning $E(S)$ into self-similar sets $A_1'',\ldots,A_n'', A_{n+1}, \ldots, A_{n+k}$ where for each $i$, the set $A_i''$ could simply be $A_i$, and where these sets collectively contain the maximal ends of $S$. Additionally, the complementary subsurface $V_i''$ to $K$ with endspace $A_i''$ has one boundary component and genus zero or infinity. Thus, by capping off the boundary component of $V_i''$ with a disk, we obtain a uniformly self-similar surface, and $S$ is the connected sum of these uniformly self-similar surfaces together with a finite-type surface obtained from $K'' \cup V_{n+1} \cup \cdots \cup V_{n+k}$ by capping off the boundary components corresponding to $V_1'', \ldots, V_n''$. This completes the proof.
\end{proof}

We note that in the proof of \Cref{T:BoundedCommutatorLength}, Claim 2 is not actually necessary since we show in this appendix that the $P_k$ sets can be absorbed into the self-similar sets $A_i$. We included Claim 2 in the proof of that theorem for completeness and for future applications.

\bibliographystyle{plain}
  \bibliography{biblio}

\end{document}